\numberwithin{equation}{section}
\newtheorem{Theorem}{Theorem}[section]
\newtheorem{Lemma}{Lemma}[section]
\newtheorem{Corollary}{Corollary}[section]
\theoremstyle{definition}
\theoremstyle{remark}
\newtheorem{Remark}{Remark}[section]
\newtheorem{Example}{Example}[section]
\newcommand{\essinf}{\mathop{\rm ess \, inf}\limits}
\newcommand{\esssup}{\mathop{\rm ess \, sup}\limits}
\newcommand{\diver}{\mathop{\rm div}\nolimits}
\author{Andrej A. Kon'kov}
\address{Department of Differential Equations,
Faculty of Mechanics and Mathematics,
Mo\-s\-cow Lo\-mo\-no\-sov State University,
Vorobyovy Gory,
119992 Moscow, Russia}
\email{konkov@mech.math.msu.su}
\title[]{On solutions of quasilinear elliptic inequalities 
containing terms with lower-order derivatives}
\thanks{The research was supported by RFBR, grant 11-01-12018-ofi-m-2011.}
\keywords{Nonlinear elliptic operators,  Unbounded domains}
\date{}
\begin{document}

\maketitle

\section{Introduction}

Let $\Omega$ be an unbounded open subset of ${\mathbb R}^n$, $n \ge 2$.
As in~\cite{LU}, by ${W_{p, loc}^1 (\Omega)}$ we mean 
the space of measurable functions that belong to ${W_p^1 (B_r \cap \Omega)}$ 
for all real numbers $r > 0$ satisfying the condition
$B_r \cap \Omega \ne \emptyset$,
where $B_r$ is the open ball in ${\mathbb R}^n$ 
of radius $r$ and center at zero.
The space ${L_{\infty, loc} (\Omega)}$ is defined in a similar way.

Consider the inequality
\begin{equation}
	{\rm div} \, A (x, D u)
	+
	b (x) |D u|^{p - 1}
	\ge
	q (x) g (u)
	\quad
	\mbox{in }
	\Omega,
	\label{1.1}
\end{equation}
where 
$D = (\partial / \partial x_1,\ldots,\partial / \partial x_n)$
is the gradient operator,
$b \in L_{\infty, loc} (\Omega)$, and
$A : \Omega \times {\mathbb R}^n \to {\mathbb R}^n$
is a measurable function such that
$$
	C_1
	|\zeta|^p
	\le
	\zeta
	A (x, \zeta),
	\quad
	|A (x, \zeta)|
	\le
	C_2
	|\zeta|^{p-1}
$$
with some constants
$C_1 > 0$,
$C_2 > 0$,
and
$p > 1$
for almost all
$x \in \Omega$
and for all
$\zeta \in {\mathbb R}^n$.

We assume that $S_r \cap \Omega \ne \emptyset$ for all $r > r_0$,
where $r_0 > 0$ is some real number and $S_r$ is the sphere in ${\mathbb R}^n$ 
of radius $r$ and center at zero. 
Also let
$q \in L_{\infty, loc} (\Omega)$
and
$g \in C ([0, \infty))$
be non-negative functions and, moreover,
$g (t) > 0$ for all $t > 0$.
We denote
$$
	f_\sigma (r)
	=
	\frac{
		\essinf_{
			\Omega_{r / \sigma, \sigma r}
		}
		q
	}{
		1
		+
		r
		\,
		\esssup_{
			\Omega_{r / \sigma, \sigma r}
		}
		|b|
	},
	\quad
	r > r_0,
	\:
	\sigma > 1,
$$
$$
	q_\sigma (r)
	=
	\essinf_{
		\Omega_{r / \sigma, \sigma r}
	}
	q,
	\quad
	r > r_0,
	\:
	\sigma > 1
$$
and
$$
	g_\theta (t)
	=
	\inf_{
		(t / \theta, \theta t)
	}
	g,
	\quad
	t > 0,
	\:
	\theta > 1,
$$
where 
$
	\Omega_{r_1, r_2}
	=
	\{
		x \in \Omega : r_1  < |x| < r_2
	\},
$
$0 \le r_1 < r_2 \le \infty$.

A non-negative function 
$
	u 
	\in 
	W_{p, loc}^1 (\Omega)
	\cap
	L_{\infty, loc} (\Omega)
$ 
is called a solution of
inequality~\eqref{1.1} if the map $x \mapsto A (x, D u)$ is measurable and
$$
	- \int_\Omega
	A (x, D u)
	D \varphi
	\, dx
	+
	\int_\Omega
	b (x) |D u|^{p - 1}
	\varphi
	\, dx
	\ge
	\int_{
		\Omega
	}
	q (x) g (u)
	\varphi
	\, dx
$$
for any non-negative function
$
	\varphi 
	\in 
	C_0^\infty (
		\Omega
	).
$
As is customary, the condition
\begin{equation}
	\left.
		u
	\right|_{
		\partial \Omega
	}
	=
	0
	\label{1.2}
\end{equation}
means that
$
	\psi u
	\in
	{
		\stackrel{\rm \scriptscriptstyle o}{W}\!\!{}_p^1
		(
			\Omega
		)
	}
$
for any
$
	\psi
	\in
	C_0^\infty 
	(
		{\mathbb R}^n
	).
$
If $\Omega = {\mathbb R}^n$,
then~\eqref{1.2} is obviously valid for all
$
	u
	\in
	W_{p, loc}^1
	(
		{\mathbb R}^n
	).
$

For every solution of~\eqref{1.1}, \eqref{1.2} we put
$$
	M (r; u)
	=
	\esssup_{
		S_r
		\cap
		\Omega
	}
	u,
	\quad
	r > r_0,
$$
where the restriction of $u$ to
$
	S_r
	\cap
	\Omega
$
is understood in the sense of the trace and
the $\esssup$ in the right-hand side is with respect to
$(n-1)$-dimensional Lebesgue measure on $S_r$.

The research presented to your attention
deals with a priori estimates and blow-up
conditions for solutions of problem \eqref{1.1}, \eqref{1.2}.
The questions treated below were investigated 
mainly for nonlinearities of the Emden-Fowler type 
$g (t) = t^\lambda$~\cite{KL, MP, MPbook, Veron, VeronBook}.
Partial cases of inequality~\eqref{1.1} were also studied in~\cite{Keller, Osserman}.
In our paper, we consider the most general case.

\section{Main Results}

\begin{Theorem}\label{T2.1}
Let
\begin{equation}
	\int_1^\infty
	(g_\theta (t) t)^{- 1 / p}
	\,
	dt
	<
	\infty
	\label{T2.1.1}
\end{equation}
and
\begin{equation}
	\int_{r_0}^\infty
	(r f_\sigma (r))^{1 / (p - 1)}
	\,
	dr
	=
	\infty
	\label{T2.1.2}
\end{equation}
for some real numbers $\theta > 1$ and $\sigma > 1$,
then any solution of~\eqref{1.1}, \eqref{1.2} is trivial, 
i.e. $u = 0$ almost everywhere in $\Omega$.
\end{Theorem}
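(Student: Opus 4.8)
The plan is to argue by contradiction: assume $u$ is a solution of~\eqref{1.1}, \eqref{1.2} that does not vanish almost everywhere in $\Omega$. Since $q$ and $g$ are non-negative, $u$ is a non-negative weak subsolution of $\diver A (x, D u) + b (x) |D u|^{p-1} \ge 0$; testing this inequality against $(u - k)_+ \zeta^p$, absorbing the first-order term by Young's inequality and running the De~Giorgi iteration (legitimate since $b \in L_{\infty, loc} (\Omega)$) gives the weak maximum principle on every bounded subdomain of $\Omega$. Combined with~\eqref{1.2}, this yields $\esssup_{\Omega \cap B_r} u \le M (r; u)$ for $r > r_0$, so $r \mapsto M (r; u)$ is non-decreasing there; and if $M (r_k; u) = 0$ along a sequence $r_k \to \infty$, then $u = 0$ a.e.\ on each $\Omega \cap B_{r_k}$, hence a.e.\ in $\Omega$ --- excluded. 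Thus there is $r_1 > r_0$ with $m_0 := M (r_1; u) > 0$ and $M (r; u) \ge m_0$ for all $r \ge r_1$.

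The core of the proof is the a priori estimate: there is a constant $C = C (n, p, C_1, C_2, \theta, \sigma)$ with
\[
	\int_{\sigma r_1}^{R}
	\bigl( r f_\sigma (r) \bigr)^{1 / (p - 1)}
	\, dr
	\le
	C
	\int_{m_0}^{\infty}
	\bigl( g_\theta (t) \, t \bigr)^{- 1 / p}
	\, dt
	\qquad \text{for all } R > \sigma r_1 .
\]
To obtain it, fix scales $\sigma r_1 = \rho_0 < \rho_1 < \dots < \rho_N \le R$ with $\rho_{j+1} = \sigma^{1/2} \rho_j$, so that the annuli $\Omega_{\rho_j / \sigma, \sigma \rho_j}$ cover $\Omega_{\sigma r_1, R}$, and on the $j$-th annulus substitute into the weak form of~\eqref{1.1} the test function $\varphi = \zeta_j^p \, w (u)$, where $\zeta_j \in C_0^\infty (\Omega)$ is a radial cut-off with $\zeta_j = 1$ near $S_{\rho_j} \cap \Omega$, $\mathrm{supp}\, \zeta_j \subset \Omega_{\rho_j / \sigma, \sigma \rho_j}$, $|D \zeta_j| \le C / \rho_j$, and $w \in C^1 ([0, \infty))$ is non-negative, non-decreasing with $w (0) = 0$, to be chosen; admissibility of $\varphi$ follows by truncating $w$, approximating, and using~\eqref{1.2}. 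Expanding $D \varphi$, keeping the favourable term $C_1 \int \zeta_j^p w' (u) |D u|^p \, dx$ via $C_1 |\zeta|^p \le \zeta A (x, \zeta)$, estimating the rest by $|A (x, \zeta)| \le C_2 |\zeta|^{p-1}$, and applying Young's inequality to $\int |D u|^{p-1} |D \zeta_j| \zeta_j^{p-1} w (u) \, dx$ and to $\int |b| \, |D u|^{p-1} \zeta_j^p w (u) \, dx$ so as to absorb a small multiple of $\int \zeta_j^p w' (u) |D u|^p \, dx$, one gets, using $\rho_j \le \sigma |x| \le \sigma^2 \rho_j$ on the annulus,
\[
	\int_{\Omega_{\rho_j / \sigma, \sigma \rho_j}}
	q (x) \, g (u) \, \zeta_j^p \, w (u) \, dx
	\le
	\frac{C}{\rho_j^{\,p}}
	\Bigl( 1 + \rho_j \, \esssup_{\Omega_{\rho_j / \sigma, \sigma \rho_j}} |b| \Bigr)^{p}
	\int_{\Omega_{\rho_j / \sigma, \sigma \rho_j}}
	\frac{w (u)^p}{w' (u)^{p-1}} \, dx .
\]

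Now one chooses $w$ in terms of $g_\theta$ so that, after using $q \ge q_\sigma (\rho_j)$ and $g (u) \ge g_\theta (M (\rho_j; u))$ on the part of the annulus where $u$ stays within a factor $\theta$ of $M (\rho_j; u)$, recalling that the denominator of $f_\sigma$ is exactly $1 + r \, \esssup_{\Omega_{r / \sigma, \sigma r}} |b|$ while $\rho_{j+1} - \rho_j \sim \rho_j$, the $j$-th inequality becomes
\[
	\int_{\rho_j}^{\rho_{j+1}}
	\bigl( r f_\sigma (r) \bigr)^{1 / (p - 1)} \, dr
	\le
	C
	\int_{M (\rho_j / \sigma; u)}^{M (\sigma^2 \rho_j; u)}
	\bigl( g_\theta (t) \, t \bigr)^{- 1 / p} \, dt ;
\]
this pins down $w$ as (up to normalisation and an appropriate power) the primitive $t \mapsto \int_{t_0}^{t} ( g_\theta (s)\, s )^{-1/p} \, ds$ appearing in~\eqref{T2.1.1}. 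Summing over $j = 0, \dots, N - 1$, using the monotonicity of $M (\cdot\,; u)$ and the bounded overlap of the intervals $( M (\rho_j / \sigma; u), M (\sigma^2 \rho_j; u) )$ (refining the scales if necessary), the right-hand sides telescope into $C \int_{m_0}^{\infty} ( g_\theta (t)\, t )^{-1/p} dt$, which is the a priori estimate. Its right-hand side is finite: on $[m_0, 1]$ the integrand $(g_\theta (t) t)^{-1/p}$ is bounded, since $g$ is continuous and strictly positive, hence bounded below on $[m_0 / \theta, \theta]$, and on $[1, \infty)$ it is integrable by~\eqref{T2.1.1}; so letting $R \to \infty$, the left-hand side diverges by~\eqref{T2.1.2}, a contradiction, whence $u = 0$ a.e.\ in $\Omega$.

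The main obstacle is this a priori estimate --- above all the precise choice of the profile $w$, which must be tied to the Keller--Osserman primitive of $g_\theta$ tightly enough that the Young remainders are genuinely absorbed and the summation over scales reconstructs $\int ( g_\theta (t)\, t )^{-1/p} dt$ without $w'$ degenerating; and the handling of $b (x) |D u|^{p-1}$, where the Young splitting must be balanced so that only the scale-invariant combination $r |b|$ --- i.e.\ the denominator of $f_\sigma$ --- survives, which is exactly why~\eqref{T2.1.2} is the sharp hypothesis. The passage from the annular volume integrals to the spherical maxima $M (r; u)$ (trace theory, and comparing $\esssup u$ over a thin shell around $S_{\rho_j}$ with $M$ on neighbouring spheres by monotonicity), together with the admissibility of the test functions $\zeta_j^p w (u)$, is routine but needs care.
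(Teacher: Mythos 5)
Your strategy (annular localization, then a contradiction between \eqref{T2.1.1} and \eqref{T2.1.2}) has the right general shape, but the central a priori estimate you build everything on is false as stated, and the step that would produce it is precisely the difficulty that the paper delegates to a separate comparison theorem. The linear pairing
$$
\int_{\sigma r_1}^{R}(r f_\sigma(r))^{1/(p-1)}\,dr\le C\int_{m_0}^{\infty}(g_\theta(t)t)^{-1/p}\,dt
$$
is dimensionally inconsistent and admits counterexamples. Take $p=2$, $A(x,\zeta)=\zeta$, $b=0$, $n\ge3$, $\Omega={\mathbb R}^n$, $u=\max\{2-|x|^{-a},1\}$ with $0<a<n-2$, $q(x)=a(n-2-a)|x|^{-a-2}/\delta$ for $|x|\ge1$ (and $q=0$ inside $B_1$), and $g\equiv\delta$ on $[0,2]$, extended quadratically above $t=2$ so that \eqref{T2.1.1} holds. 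This $u$ is a nontrivial solution, $\int_{r_1}^{R}r q_\sigma(r)\,dr\to c/\delta$ as $R\to\infty$, while $\int_{1}^{\infty}(g_\theta(t)t)^{-1/2}\,dt=O(\delta^{-1/2})$; for small $\delta$ no constant depending only on $n,p,\theta,\sigma,C_1,C_2$ can close the gap. The correct relations are those of the paper's Lemmas~\ref{L3.3}--\ref{L3.5}: when $M(\cdot;u)$ grows by a definite factor across the annulus, $\int(g_\eta(t)t)^{-1/p}\,dt$ dominates $\int q_\tau^{1/p}(\xi)\,d\xi$ (note $q^{1/p}$, not $(\xi f)^{1/(p-1)}$), and this is converted into the $f_\sigma$-integral only after raising to the power $p/(p-1)$ --- whence the exponent $(p-1)/p$ in Lemma~\ref{L3.5} and Theorem~\ref{T2.4}; when $M(\cdot;u)$ grows slowly --- in particular for bounded solutions --- the correct pairing is $\int g_\theta^{-1/(p-1)}(t)\,dt\ge C\int(\xi f_\sigma(\xi))^{1/(p-1)}\,d\xi$ (Lemma~\ref{L3.4}), whose left side over a compact range of $u$ is finite independently of \eqref{T2.1.1}. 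The proof must therefore split into two regimes, as the paper's does: bounded $M$ gives a contradiction with \eqref{T2.1.2} alone via Lemma~\ref{L3.4}; unbounded $M$ gives a contradiction with both hypotheses via Lemma~\ref{L3.5}. Your single estimate conflates the two and, for the contradiction you draw in the divergent regime, would anyway only be needed (and is only provable) with the $(p-1)/p$ power on the left.

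There is a second, independent gap in the derivation itself. Your Caccioppoli inequality is fine, but passing from it to a per-annulus bound involving $M(\rho_j;u)$ requires lower-bounding $\int q\,g(u)\,\zeta_j^p\,w(u)\,dx$ by $q_\sigma(\rho_j)\,g_\theta(M(\rho_j;u))$ times something of definite size, i.e.\ a lower bound on the measure of the set where $u$ is comparable to its essential supremum on the annulus. No such bound holds for subsolutions of quasilinear inequalities: $u$ may be close to $M(\rho_j;u)$ on an arbitrarily small set, and there is no weak Harnack inequality from above. This is exactly why the paper does not use a test-function/nonlinear-capacity argument at all: it invokes the comparison theorem of \cite{Me} (Lemma~\ref{L3.1}), which yields a genuine pointwise lower bound on the increment $M(\rho_1;u)-M(\rho_0;u)$ with the characteristic $\min\{(\rho_1-\rho_0)^{p/(p-1)},(\rho_1-\rho_0)\lambda^{-1/(p-1)}\}$ dichotomy, and then iterates this dichotomy along a carefully chosen sequence of radii (Lemmas~\ref{L3.2}--\ref{L3.5}). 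Without an ingredient of that strength, the choice of the profile $w$ --- which you correctly identify as the main obstacle but leave unexecuted --- cannot be completed.
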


\begin{Remark}\label{R2.1}
We remind that, by definition, all solutions of~\eqref{1.1}, \eqref{1.2} are non-negative
functions since the domain of the function $g$ is the interval $[0, \infty)$.
\end{Remark}

\begin{Theorem}\label{T2.2}
Let there be real numbers $\theta > 1$ and $\sigma > 1$ such that~\eqref{T2.1.2}
is valid and, moreover, 
\begin{equation}
	\int_1^\infty
	(g_\theta (t) t)^{-1 / p}
	\,
	dt
	=
	\infty.
	\label{T2.2.1}
\end{equation}
Then any nontrivial solution of~\eqref{1.1}, \eqref{1.2} satisfies the estimate
$$
	\int_1^{
		M (r; u)
	}
	(g_\theta (t) t)^{-1 / p}
	\,
	dt
	\ge
	C
	\left(
		\int_{r_0}^r
		(\xi f_\sigma (\xi))^{1 / (p - 1)}
		\,
		d\xi
	\right)^{(p - 1) / p}
$$
for all sufficiently large $r$, 
where the constant $C > 0$ depends only on $n$, $p$, $\theta$, $\sigma$, $C_1$, and $C_2$.
\end{Theorem}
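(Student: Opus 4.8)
The plan is to reduce the assertion to an ordinary differential inequality for
\[
	\Psi (r)
	=
	\int_1^{M (r; u)}
	(g_\theta (t) t)^{- 1 / p}
	\,
	dt
\]
and then integrate it, the divergence condition~\eqref{T2.1.2} supplying the growth on the right-hand side. Three preliminary reductions are convenient: (a) since $|b| \ge b$ and $f_\sigma$ involves only $|b|$, every solution of~\eqref{1.1}, \eqref{1.2} also solves the problem with $b$ replaced by $|b|$, so one may assume $b \ge 0$; (b) as $q, g, u \ge 0$, the function $u$ is a non-negative subsolution of $\diver A (x, Du) + b (x) |D u|^{p - 1} \ge 0$, and combining this with~\eqref{1.2} and the weak maximum principle on small balls (on which the drift is harmless, the relevant Poincar\'e constant being comparable to the radius) gives $\esssup_{B_\rho \cap \Omega} u = M (\rho; u)$ for $\rho > r_0$, so $r \mapsto M (r; u)$ is non-decreasing and positive for large $r$ once $u \not\equiv 0$, and $\Psi$ is well defined for large $r$; (c) the differential inequality below forces $\Psi (r) \to + \infty$, which is consistent with~\eqref{T2.2.1} but, under~\eqref{T2.1.1}, would contradict $\Psi (r) \le \int_1^\infty (g_\theta (t) t)^{- 1 / p} \, dt < \infty$ --- this is exactly Theorem~\ref{T2.1}, and it explains why~\eqref{T2.2.1} is assumed here.

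The heart of the proof is the estimate
\[
	\frac{d}{dr}
	\bigl(
		\Psi (r)^{p / (p - 1)}
	\bigr)
	\ge
	C
	\bigl(
		r f_\sigma (r)
	\bigr)^{1 / (p - 1)} ,
	\qquad
	r > r_1 ,
\]
for a suitable $r_1 > r_0$ and $C = C (n, p, \theta, \sigma, C_1, C_2) > 0$ (in practice one proves a discrete version of it along a geometric sequence $\sigma^k r_1$ and adds the inequalities). In the radial model $\Omega = \{ |x| > r_0 \}$, $A (x, \zeta) = |\zeta|^{p - 2} \zeta$, the mechanism is transparent: $\rho^{n - 1} |u'|^{p - 2} u'$ is non-decreasing, with $(\rho^{n - 1} |u'|^{p - 2} u')' \ge \rho^{n - 1} q g (u)$, whence $u' (\rho) \ge \rho^{- (n - 1) / (p - 1)} \bigl( \int_{r_0}^\rho s^{n - 1} q (s) g (u (s)) \, ds \bigr)^{1 / (p - 1)}$; replacing $q$ by $q_\sigma (\rho)$, $g (u)$ by $g_\theta (M (\,\cdot\,; u))$ on the appropriate sub-annulus and analysing the resulting coupled relations under the substitution $t = M (s; u)$ leads to the displayed inequality. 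For general $\Omega$ and $A$ one argues weakly: test~\eqref{1.1} with $\varphi = \Phi (u) \, \zeta (|x|)$, where $\Phi$ is the increasing function with $\Phi (0) = 0$ built from $g_\theta$ so as to reproduce $\int_1^{\,\cdot\,} (g_\theta (t) t)^{- 1 / p} \, dt$, and $\zeta$ is a Lipschitz cut-off of a ball; the condition $\Phi (0) = 0$ makes $\varphi$ vanish on $\partial \Omega$ by~\eqref{1.2}, so that the boundary condition is used rather than fought. Letting the cut-off sharpen to $\{ |x| \le \rho \}$ yields the flux--source--energy balance
\[
	\int_{S_\rho \cap \Omega}
	\Bigl(
		A (x, Du) \cdot \frac{x}{|x|}
	\Bigr)
	\Phi (u)
	\, dS
	+
	\int_{B_\rho \cap \Omega}
	b \, |D u|^{p - 1} \Phi (u)
	\, dx
	\ge
	\int_{B_\rho \cap \Omega}
	\bigl(
		q g (u) \Phi (u)
		+
		C_1 \Phi' (u) |D u|^p
	\bigr)
	dx
\]
for a.e.\ $\rho$; bounding the flux, via $| A | \le C_2 |D u|^{p - 1}$ and the one-dimensional H\"older inequality in $|x|$, by a multiple of $\bigl( \frac{d}{d \rho} \int_{B_\rho \cap \Omega} \Phi' (u) |D u|^p \, dx \bigr)^{(p - 1) / p} \rho^{(n - 1) / p}$, and lower-bounding the source and energy integrals in terms of $M (\,\cdot\,; u)$ by means of $q \ge q_\sigma$, $g \ge g_\theta$, the trace theorem and the monotonicity of $M (\,\cdot\,; u)$, converts the balance into the differential inequality above.

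The delicate step --- which I expect to be the main obstacle --- is controlling the drift term $\int b \, |D u|^{p - 1} \Phi (u)$. A direct use of Young's inequality against $\int \Phi' (u) |D u|^p$ replaces $\essinf q$ not by $f_\sigma (r) = \essinf q / (1 + r \, \esssup |b|)$ but by the weaker $\essinf q / (1 + r \, \esssup |b|)^p$. To recover the missing $p - 1$ powers one must localise to shells of width $\ell \sim \min \{ r, \, ( \esssup_{\Omega_{r / \sigma, \sigma r}} |b| )^{- 1} \} = r / (1 + r \, \esssup_{\Omega_{r / \sigma, \sigma r}} |b|)$ rather than of width comparable to $r$: on the support of the corresponding cut-off $\zeta$ one then has $|D \zeta| + |b| \zeta \lesssim 1 / \ell$, while the support has $n$-dimensional measure $\sim r^{n - 1} \ell$, so $\int (|D \zeta| + |b| \zeta)^p \lesssim r^{n - p} (1 + r \, \esssup |b|)^{p - 1}$, which against $\essinf q$ gives precisely $f_\sigma (r)$. (Equivalently, one may absorb the drift by the integrating factor $e^{- (p - 1) B (|x|)}$ with $B (\rho) = \int_{r_0}^\rho \esssup_{S_s \cap \Omega} |b| \, ds$, which is comparable to a constant over each such shell.)

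It remains to integrate. From the displayed differential inequality,
\[
	\Psi (r)^{p / (p - 1)}
	\ge
	\Psi (r_1)^{p / (p - 1)}
	+
	C
	\int_{r_1}^r
	(\xi f_\sigma (\xi))^{1 / (p - 1)}
	\,
	d\xi
	\qquad (r > r_1) .
\]
By~\eqref{T2.1.2} the last integral tends to $+ \infty$, hence for all sufficiently large $r$ it exceeds one half of $\int_{r_0}^r (\xi f_\sigma (\xi))^{1 / (p - 1)} \, d\xi$; discarding $\Psi (r_1)^{p / (p - 1)} \ge 0$ and raising to the power $(p - 1) / p$ gives the assertion, with a constant $C > 0$ still depending only on $n$, $p$, $\theta$, $\sigma$, $C_1$, $C_2$.
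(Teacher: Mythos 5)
Your overall architecture is right in outline — an integrated inequality of the form $\Psi(r)^{p/(p-1)}\ge \Psi(r_1)^{p/(p-1)}+C\int_{r_1}^r(\xi f_\sigma(\xi))^{1/(p-1)}\,d\xi$ is indeed what the paper's Lemma~\ref{L3.5} delivers, and your identification of the correct localization scale $\ell\sim r/(1+r\,\esssup|b|)$ is exactly where $f_\sigma$ comes from. But the step you call the heart of the proof, the differential inequality $\frac{d}{dr}(\Psi^{p/(p-1)})\ge C(rf_\sigma)^{1/(p-1)}$ ``proved shell by shell along a geometric sequence and summed,'' has a genuine gap: the shell-by-shell version is false. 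On a shell $[\rho_{i+1},\rho_i]$ over which $M(\cdot;u)$ increases by only a small multiplicative factor, what the equation actually yields (this is Lemma~\ref{L3.4}) is a lower bound for $\int_{M(\rho_{i+1}+0;u)}^{M(\rho_i;u)}g_\theta^{-1/(p-1)}(t)\,dt$, not for $\bigl(\int(g_\theta(t)t)^{-1/p}dt\bigr)^{p/(p-1)}$; and for a short $t$-interval $[m,m+\epsilon]$ the second quantity is of order $\epsilon^{p/(p-1)}$ while the first is of order $\epsilon$, so no shell-wise conversion is possible. The conversion (the paper's inequality~\eqref{PL3.5.7}) is only valid over a full multiplicative range $M(r_2;u)\ge\theta M(r_1+0;u)$, which is why the paper does \emph{not} sum a single estimate: it splits the shells into two families $\Xi_1$ (fast growth of $M$, controlled via $q_\sigma^{1/p}$ and a superadditivity trick for the $p/(p-1)$ power) and $\Xi_2$ (slow growth, controlled via $g_\theta^{-1/(p-1)}$), observes that one family must carry half of $\int(\xi f_\sigma)^{1/(p-1)}d\xi$, sums each type of estimate globally, and only then converts. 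This global dichotomy is the actual content of the proof and is absent from your sketch. A second, smaller omission: to have $M(r;u)\ge\theta M(r_1+0;u)$ available at all one must first exclude that $M(\cdot;u)$ stays bounded; the paper does this from~\eqref{T2.1.2} via the Lemma~\ref{L3.4} estimate (finiteness of $\int g_\theta^{-1/(p-1)}$ over a compact subinterval of $(0,\infty)$ versus divergence of $\int(\xi f_\sigma)^{1/(p-1)}d\xi$), whereas you derive it from the unproven differential inequality, which is circular.

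A further soft spot is the passage from the weak formulation to statements about $M(r;u)=\esssup_{S_r\cap\Omega}u$. Testing with $\varphi=\Phi(u)\zeta(|x|)$ gives integral information; turning that into pointwise control of essential suprema on spheres for merely bounded measurable $A$, $b$, $q$ in an arbitrary unbounded $\Omega$ requires a comparison or weak-Harnack step — this is precisely what the cited Lemma~\ref{L3.1} (from~\cite{Me}) packages, and ``the trace theorem and the monotonicity of $M$'' will not substitute for it. Your reductions (a) and (b) and your final integration step are fine (monotonicity of $\Psi$ gives the correct direction for the fundamental-theorem inequality, and~\eqref{T2.1.2} lets you replace $r_1$ by $r_0$ at the cost of a factor $1/2$), but as written the proposal does not contain the two ideas — the $\Xi_1/\Xi_2$ dichotomy with global summation, and the prior exclusion of bounded $M$ — on which the proof actually turns.
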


\begin{Theorem}\label{T2.3}
In the hypotheses of Theorem~$\ref{T2.2}$, let the condition
\begin{equation}
	\int_{r_0}^\infty
	\min
	\{
		(\xi f_\sigma (\xi))^{1 / (p - 1)},
		\,
		q_\sigma^{1 / p}
		(\xi)
	\}
	\,
	d\xi
	=
	\infty
	\label{T2.3.1}
\end{equation}
be valid instead of~\eqref{T2.1.2}.
Then any nontrivial solution of~\eqref{1.1}, \eqref{1.2} satisfies the estimate
\begin{align*}
	&
	\int_1^{
		M (r; u)
	}
	(g_\theta (t) t)^{-1 / p}
	\,
	dt
	+
	\int_1^{
		M (r; u)
	}
	g_\theta^{-1 / (p - 1)}
	(t)
	\,
	dt
	\\
	&
	\quad
	\ge
	C
	\int_{r_0}^r
	\min
	\{
		(\xi f_\sigma (\xi))^{1 / (p - 1)},
		\,
		q_\sigma^{1 / p}
		(\xi)
	\}
	\,
	d\xi
\end{align*}
for all sufficiently large $r$, 
where the constant $C > 0$ depends only on $n$, $p$, $\theta$, $\sigma$, $C_1$, and $C_2$.
\end{Theorem}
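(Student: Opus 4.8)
The strategy is to refine the argument behind Theorem 2.2 by exploiting the full strength of inequality~\eqref{1.1}, namely the presence of the positive term $q(x)g(u)$ on the right-hand side, rather than absorbing it only against the gradient term $b(x)|Du|^{p-1}$. I would start from the same integral identity used for Theorem 2.2: test~\eqref{1.1} with $\varphi = \eta^p \Phi(u)$, where $\eta \in C_0^\infty(\Omega)$ is a radial cut-off supported in an annulus $\Omega_{r/\sigma,\sigma r}$ and $\Phi$ is a suitable nondecreasing function of $u$ to be chosen (e.g. a truncation of $t \mapsto \int_0^t (g_\theta(s)s)^{-1}\,ds$ or of $t\mapsto \int_0^t g_\theta^{-1/(p-1)}(s)\,ds$, adapted to the two summands appearing in the conclusion). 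After integrating by parts and using the structure conditions $C_1|\zeta|^p \le \zeta A(x,\zeta)$, $|A(x,\zeta)|\le C_2|\zeta|^{p-1}$, Young's inequality splits the gradient interaction into a good term controlling $\int \eta^p |Du|^p \Phi'(u)$ and a remainder of the form $C\int |D\eta|^p(\cdots) + C\int \eta^p |b|^p (\cdots)$. The new point is that $\int \eta^p q(x) g(u)\Phi(u)\,dx$ is now kept and used to dominate the $|b|^p$ remainder on the region where $r|b|$ is large, while on the complementary region the $f_\sigma$ term does the job; this is precisely the mechanism that produces $\min\{(\xi f_\sigma(\xi))^{1/(p-1)}, q_\sigma^{1/p}(\xi)\}$ rather than $(\xi f_\sigma(\xi))^{1/(p-1)}$ alone.

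Next I would convert the resulting differential inequality into the claimed integral estimate. Writing $\mu(r)$ for a quantity like $\esssup_{\Omega_{r/\sigma,\sigma r}} u$ (or $M(r;u)$ up to the harmless change of annulus), the localized energy estimate yields, after choosing $\eta$ linear in $|x|$ on the annulus and optimizing, a recursive inequality comparing the "capacity-type" quantity built from $G_\theta(\mu(r)) := \int_1^{\mu(r)}(g_\theta(t)t)^{-1/p}\,dt + \int_1^{\mu(r)} g_\theta^{-1/(p-1)}(t)\,dt$ at scale $r$ with its value at a smaller scale plus an increment proportional to $\min\{(r f_\sigma(r))^{1/(p-1)}, q_\sigma^{1/p}(r)\}$. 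Iterating this over a geometric sequence of radii $r_k = \sigma^{2k} r_0$ (so that consecutive annuli overlap and $\mu(r_k)$ is monotone), and summing the increments, gives the lower bound for $G_\theta(M(r;u))$ by $C\int_{r_0}^r \min\{\cdots\}\,d\xi$. The divergence hypothesis~\eqref{T2.3.1} together with~\eqref{T2.2.1} guarantees the right-hand side is unbounded, which is consistent with nontriviality (and in fact, as in Theorem 2.1, forces triviality when the left side stays bounded — but here~\eqref{T2.2.1} makes $G_\theta$ unbounded, so the estimate is the sharp conclusion).

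The main technical obstacle, as in Theorem 2.2, is the passage from the weak formulation (valid only for $\varphi \in C_0^\infty(\Omega)$) to test functions involving $u$ itself and a function of $u$ that need not be Lipschitz near $t=0$: one must justify the truncation argument, use that $u \in W^1_{p,loc}\cap L_{\infty,loc}$ and the boundary condition~\eqref{1.2} (to get $\psi u \in \stackrel{\rm o}{W}{}^1_p$, hence a valid approximation by smooth compactly supported functions inside $\Omega$), and handle the set $\{u = 0\}$ where $g(u)$ may vanish and $\Phi'(u)$ may blow up. A careful choice of the truncations $\Phi_\varepsilon$ — bounded, Lipschitz, vanishing for $u \le \varepsilon$ — together with a limiting argument $\varepsilon \to 0$ using monotone/dominated convergence, is what makes this rigorous; the new difficulty relative to Theorem 2.2 is merely bookkeeping, namely carrying the two different $\Phi$-type quantities simultaneously and checking that the $\min$ survives the optimization in $\eta$. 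The dependence of $C$ on $n,p,\theta,\sigma,C_1,C_2$ only (and not on $b$, $q$, $g$, or $u$) is automatic from the scale-invariant way $\eta$ and the annuli are chosen.
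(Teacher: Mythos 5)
Your overall framework (testing with $\eta^p\Phi(u)$ and iterating over annuli) is plausible in outline, but the specific mechanism you propose for producing the minimum $\min\{(\xi f_\sigma(\xi))^{1/(p-1)},\, q_\sigma^{1/p}(\xi)\}$ is not the right one, and that minimum is the whole point of Theorem~\ref{T2.3} as opposed to Theorem~\ref{T2.2}. You attribute the min to a splitting of the domain according to whether $r|b|$ is large or small. That cannot be its source: the factor $1+r\esssup|b|$ is already absorbed into $f_\sigma$, and the min persists even when $b\equiv 0$, in which case $f_\sigma=q_\sigma$ and the two competitors $(\xi q_\sigma)^{1/(p-1)}$ and $q_\sigma^{1/p}$ differ only in which power of $q_\sigma$ appears (this is Corollary~\ref{C2.3} and the $b=0$ results of~\cite{MeDM}). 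The actual dichotomy in the paper is in the behavior of the solution, not of the coefficient: on each annulus $(\rho_{i+1},\rho_i)$ one asks whether $M(\cdot;u)$ increases by a fixed factor $\eta$ or not. If it does, Lemma~\ref{L3.3} (built on the Osserman-type alternative of Lemmas~\ref{L3.1}--\ref{L3.2}) yields over that annulus either an estimate of $\int(g_\theta(t)t)^{-1/p}\,dt$ from below by $C\int q_\sigma^{1/p}(\xi)\,d\xi$ or an estimate of $\int g_\theta^{-1/(p-1)}(t)\,dt$ from below by $C\int(\xi f_\sigma(\xi))^{1/(p-1)}\,d\xi$; if it does not, Lemma~\ref{L3.4} yields the second estimate. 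In every case the \emph{sum} of the two $t$-integrals dominates $C$ times the integral of the \emph{minimum} of the two $\xi$-integrands, and summing over the annuli gives the theorem. The precise pairing --- $(g_\theta(t)t)^{-1/p}$ with $q_\sigma^{1/p}$, and $g_\theta^{-1/(p-1)}$ with $(\xi f_\sigma)^{1/(p-1)}$ --- is exactly what the dichotomy must deliver; you dismiss it as bookkeeping, but it is the crux.

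There is a second gap: iterating over the fixed geometric radii $r_k=\sigma^{2k}r_0$ is not adequate in the regime where $M(r;u)$ grows rapidly. The paper's Lemma~\ref{L3.3} chooses the radii $\rho_i$ adaptively by a stopping rule ($\rho_{i+1}$ is the infimum of the radii at which $M$ exceeds $\eta^{-1/8}M(\rho_i;u)$), so that the increments of $\int(g_\theta(t)t)^{-1/p}\,dt$ can be matched term by term with increments of $\int q_\sigma^{1/p}\,d\xi$; a purely geometric subdivision loses this matching when the solution blows up quickly within a single annulus. Finally, note that the paper never rederives the basic pointwise growth estimate by a test-function computation: it quotes the comparison Lemma~\ref{L3.1} from~\cite{Me}, whose conclusion already contains the alternative $\min\{(\rho_1-\rho_0)^{p/(p-1)},(\rho_1-\rho_0)\lambda^{-1/(p-1)}\}$ from which both branches above descend. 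If you insist on direct energy estimates you would in effect have to reprove that lemma, and your sketch does not indicate how its two branches would emerge from the Young-inequality splitting you describe.
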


\begin{Theorem}\label{T2.4}
Let there be real numbers $\theta > 1$ and $\sigma > 1$ such that~\eqref{T2.1.1}
is valid and, moreover, 
$$
	\int_{r_0}^\infty
	(r f_\sigma (r))^{1 / (p - 1)}
	\,
	dr
	<
	\infty.
$$
Then any solution of~\eqref{1.1}, \eqref{1.2} satisfies the estimate
\begin{equation}
	\int_{
		M (r; u)
	}^\infty
	(g_\theta (t) t)^{-1 / p}
	\,
	dt
	\ge
	C
	\left(
		\int_r^\infty
		(\xi f_\sigma (\xi))^{1 / (p - 1)}
		\,
		d\xi
	\right)^{(p - 1) / p}
	\label{T2.4.1}
\end{equation}
for all sufficiently large $r$, 
where the constant $C > 0$ depends only on $n$, $p$, $\theta$, $\sigma$, $C_1$, and $C_2$.
\end{Theorem}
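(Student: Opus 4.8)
The plan is to repeat the scheme behind Theorem~\ref{T2.2}, the only structural change being that the basic one-dimensional inequality is now summed from a given radius $r$ \emph{outward to infinity} rather than inward from $r_0$; the hypothesis $\int_{r_0}^\infty(\xi f_\sigma(\xi))^{1/(p-1)}\,d\xi<\infty$ is precisely what makes that series converge and lets one discard a non-negative tail term. Write $\Theta(s)=\int_s^\infty(g_\theta(t)t)^{-1/p}\,dt$, which is finite for every $s\ge 0$ by \eqref{T2.1.1}; then \eqref{T2.4.1} is the assertion that $\Theta(M(r;u))^{p/(p-1)}\ge C\int_r^\infty(\xi f_\sigma(\xi))^{1/(p-1)}\,d\xi$ for all sufficiently large $r$.

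Fix a nontrivial solution $u$. Since $q,g\ge 0$ and $u\ge 0$, inequality \eqref{1.1} gives $\diver A(x,Du)+b(x)|Du|^{p-1}\ge 0$, so under the structure conditions on $A$ the function $u$ is a non-negative subsolution of a quasilinear elliptic equation of $p$-Laplacian type with a first-order term of linear growth. The maximum principle on balls, together with the boundary condition \eqref{1.2}, then yields $M(\rho;u)=\esssup_{\Omega\cap B_\rho}u$ for all large $\rho$; in particular $\rho\mapsto M(\rho;u)$ is non-decreasing and, $u$ being nontrivial, strictly positive for all large $\rho$, so $\rho\mapsto\Theta(M(\rho;u))$ is non-increasing there. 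Establishing this monotonicity at the outset is what later legitimises the summation from $r$ to $\infty$ and dispenses with any covering argument.

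The heart of the proof is a one-annulus estimate. For large $\rho$ test \eqref{1.1} with $\varphi=\psi^p v(u)$, where $\psi$ is a Lipschitz radial cut-off supported in $\Omega_{\rho/\sigma,\sigma\rho}$ with $|D\psi|\le c/\rho$ and $v\ge 0$ is a $C^1$ non-decreasing function of $u$ to be specified in terms of $g_\theta$ and $\Theta$. Expanding $D\varphi$, the term carrying $v'(u)A(x,Du)Du$ has the favourable sign thanks to $C_1|Du|^p\le Du\,A(x,Du)$; the remaining gradient terms are gathered as $\int(|D\psi|\psi^{p-1}+|b|\psi^p)|Du|^{p-1}v(u)\,dx$, the factor $|D\psi|+|b|\psi$ is bounded on the annulus by $C\rho^{-1}(1+\rho\,\esssup_{\Omega_{\rho/\sigma,\sigma\rho}}|b|)$ --- exactly the quantity normalising $f_\sigma$ --- and Young's inequality absorbs the gradient into the good term. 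Using $q\ge\essinf_{\Omega_{\rho/\sigma,\sigma\rho}}q$ on $\mathrm{supp}\,\psi$, choosing $v$ so that the combination of $g$ with $v$ reproduces $g_\theta$ and its antiderivative $\Theta$, and converting the spatial integrals into the pointwise quantities $M(\rho/\sigma;u)$ and $M(\sigma\rho;u)$ by means of the local regularity theory for inequalities of the form \eqref{1.1}, one obtains
$$\Theta(M(\rho/\sigma;u))^{p/(p-1)}-\Theta(M(\sigma\rho;u))^{p/(p-1)}\ge C\int_{\rho/\sigma}^{\sigma\rho}(\xi f_\sigma(\xi))^{1/(p-1)}\,d\xi,$$
the exponent $p/(p-1)$ coming from inverting the $p$-homogeneity in the use of Young's inequality.

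It remains to iterate. Take the geometric chain $\rho_k=\sigma^{2k-1}r$, $k\ge 1$, so that $\rho_1/\sigma=r$ and the annuli $[\rho_k/\sigma,\sigma\rho_k]=[\sigma^{2k-2}r,\sigma^{2k}r]$ tile $[r,\infty)$. Summing the one-annulus estimate over $k\ge 1$, the left-hand side telescopes to $\Theta(M(r;u))^{p/(p-1)}-\lim_{k\to\infty}\Theta(M(\sigma^{2k}r;u))^{p/(p-1)}$; the limit is non-negative and is discarded, while the right-hand side sums --- this is where the finiteness of $\int_{r_0}^\infty(\xi f_\sigma(\xi))^{1/(p-1)}\,d\xi$ is used --- to $C\int_r^\infty(\xi f_\sigma(\xi))^{1/(p-1)}\,d\xi$; this is \eqref{T2.4.1}. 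I expect the genuinely hard part to be the middle step: the correct choice of the profile $v$ and, above all, passing from the tested integral inequality over the annulus to a clean inequality for the pointwise quantity $M(\cdot;u)$, which requires local regularity estimates for \eqref{1.1} that remain uniform in the presence of a first-order coefficient $b$ of linear growth --- once more the reason the definition of $f_\sigma$ carries the factor $1+\rho\,\esssup|b|$. By contrast the reversal of the direction of summation is essentially free, since the discarded endpoint term is automatically non-negative and the monotonicity of $M(\rho;u)$ has been secured in advance.
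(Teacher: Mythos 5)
Your overall strategy---sum an annulus estimate outward from $r$ and discard the non-negative tail---is in the right spirit; the paper does essentially this by applying Lemma~\ref{L3.5} on $[r,r_2]$ and letting $r_2\to\infty$, after disposing of the case of bounded $M(\cdot;u)$ trivially. But your argument hinges entirely on the single per-annulus inequality
$$
\Theta(M(\rho/\sigma;u))^{p/(p-1)}-\Theta(M(\sigma\rho;u))^{p/(p-1)}
\ge
C\int_{\rho/\sigma}^{\sigma\rho}(\xi f_\sigma(\xi))^{1/(p-1)}\,d\xi,
$$
and this is where there is a genuine gap: the inequality is neither proved nor, for a general $g$, true. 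What the comparison machinery actually yields (Lemma~\ref{L3.2}, resting on the externally cited Lemma~\ref{L3.1}) is a dichotomy: on each annulus either $M$ grows by a fixed factor, in which case one controls $\int (g_\eta(t)t)^{-1/p}\,dt$ from below by $\int q_\tau^{1/p}$ (Lemma~\ref{L3.3}), or it does not, in which case one controls the \emph{different} quantity $\int g_\eta^{-1/(p-1)}(t)\,dt$ by $\int(\xi f_\tau(\xi))^{1/(p-1)}\,d\xi$ (Lemma~\ref{L3.4}). To fold the second alternative into your telescoping quantity you would need, via the mean value theorem, a pointwise bound of the type $\Theta(M)\ge c\,(M^{p-1}/g_\theta(M))^{1/p}$. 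That holds for Emden--Fowler nonlinearities but fails for a general continuous $g>0$ satisfying only \eqref{T2.1.1} (let $g$ jump steeply just above $M$: then $\Theta(M)$ is tiny while $g_\theta(M)$ is still moderate). In the first alternative, your left-hand side degenerates when $M$ barely changes across the annulus while the right-hand side does not; so the per-annulus estimate cannot hold as stated.

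The paper avoids this precisely by not localizing. In Lemma~\ref{L3.5} the two alternatives are summed separately over all annuli of $[r_1,r_2]$; the superadditivity $(\sum_i a_i)^{p/(p-1)}\ge\sum_i a_i^{p/(p-1)}$ converts the accumulated $q_\tau^{1/p}$-contributions into the $f_\sigma$-integral, and the comparison \eqref{PL3.5.7} between $\bigl(\int(g_\theta(t)t)^{-1/p}\,dt\bigr)^{p/(p-1)}$ and $\int g_\eta^{-1/(p-1)}(t)\,dt$ is obtained by chaining over the range $[M(r_1+0;u),M(r_2;u)]$ in multiplicative steps of $\eta^{1/2}$---which requires the \emph{total} growth $M(r_2;u)\ge\theta M(r_1+0;u)$, guaranteed once $M(r;u)\to\infty$ but not available annulus by annulus. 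Your middle step also leaves the passage from the tested integral inequality to pointwise bounds on $M(\cdot;u)$ entirely unproved; in the paper this is exactly the content of Lemma~\ref{L3.1}. Two smaller points: $\Theta(s)$ need not be finite for $s$ near $0$, since \eqref{T2.1.1} only controls $\int_1^\infty$; and the case where $M(\cdot;u)$ stays bounded should be treated separately, where \eqref{T2.4.1} is immediate because its right-hand side tends to $0$ as $r\to\infty$.
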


The proof of Theorems~\ref{T2.1}--\ref{T2.4} is given in Section~\ref{Proof}.
Now, we consider the case that
\begin{equation}
	|b (x)| \le \alpha |x|^k
	\label{2.1}
\end{equation}
with some constants $\alpha$ and $k$ for almost all $x \in \Omega$.

\begin{Corollary}\label{C2.1}
In formula~\eqref{2.1}, let $k \le -1$.
If there are real numbers $\theta > 1$ and $\sigma > 1$ such that~\eqref{T2.1.1} is valid and
\begin{equation}
	\int_{r_0}^\infty
	(r q_\sigma (r))^{1 / (p - 1)}
	\,
	dr
	=
	\infty,
	\label{C2.1.1}
\end{equation}
then any solution of~\eqref{1.1}, \eqref{1.2} is trivial. 
\end{Corollary}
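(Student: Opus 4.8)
The plan is to reduce Corollary~\ref{C2.1} to Theorem~\ref{T2.1}: since hypothesis~\eqref{T2.1.1} is assumed outright, it suffices to show that, under~\eqref{2.1} with $k \le -1$, the divergence~\eqref{C2.1.1} forces~\eqref{T2.1.2}, after which Theorem~\ref{T2.1} gives $u = 0$ almost everywhere in $\Omega$.

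The first step is to bound the denominator appearing in the definition of $f_\sigma$. For $r > r_0$ and $x \in \Omega_{r/\sigma, \sigma r}$ one has $|x| > r/\sigma$, and since $t \mapsto t^k$ is non-increasing for $k < 0$, condition~\eqref{2.1} yields $\esssup_{\Omega_{r/\sigma, \sigma r}} |b| \le \alpha (r/\sigma)^k = \alpha \sigma^{-k} r^k$. Multiplying by $r$, we get $r \, \esssup_{\Omega_{r/\sigma, \sigma r}} |b| \le \alpha \sigma^{-k} r^{k+1}$, and here the assumption $k \le -1$ enters essentially: the exponent $k+1$ is non-positive, so $r^{k+1} \le r_0^{k+1}$ for all $r \ge r_0$, whence
\[
	1
	+
	r
	\,
	\esssup_{\Omega_{r/\sigma, \sigma r}} |b|
	\le
	c_1
	:=
	1
	+
	\alpha \sigma^{-k} r_0^{k+1},
\]
a constant depending only on $\alpha$, $\sigma$, $k$, and $r_0$.

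The second step is the comparison itself. From the bound above, $f_\sigma(r) \ge q_\sigma(r)/c_1$ for all $r > r_0$, hence $(r f_\sigma(r))^{1/(p-1)} \ge c_1^{-1/(p-1)} (r q_\sigma(r))^{1/(p-1)}$; integrating over $(r_0, \infty)$ and using~\eqref{C2.1.1} gives $\int_{r_0}^\infty (r f_\sigma(r))^{1/(p-1)}\, dr = \infty$, i.e.~\eqref{T2.1.2}. Both hypotheses of Theorem~\ref{T2.1} then hold for the same $\theta$ and $\sigma$, and the conclusion follows.

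There is no genuine analytic obstacle in this argument — it is a one-line domination once the factor $1 + r\,\esssup|b|$ is controlled. The only point requiring care is that this control truly needs $k \le -1$ and not merely $k < 0$: if $-1 < k < 0$, then $r^{k+1} \to \infty$, the denominator of $f_\sigma$ is unbounded, and $f_\sigma$ may decay strictly faster than $q_\sigma$, so the passage through Theorem~\ref{T2.1} breaks down and one would have to fall back on the finer estimates of Theorems~\ref{T2.2}–\ref{T2.3}.
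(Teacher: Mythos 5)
Your proof is correct and follows exactly the paper's route: the paper disposes of Corollaries~\ref{C2.1}--\ref{C2.4} by the same one-line domination $f_\sigma(r) \ge \gamma\, q_\sigma(r)$ for $k \le -1$, which you simply spell out (noting, correctly, that the bounding constant also involves $r_0$ when $k < -1$), and then invokes Theorem~\ref{T2.1}.
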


\begin{Corollary}\label{C2.2}
In formula~\eqref{2.1}, let $k \le -1$.
Also suppose that conditions~\eqref{T2.2.1} and~\eqref{C2.1.1} are valid
for some real numbers $\theta > 1$ and $\sigma > 1$.
Then any nontrivial solution of~\eqref{1.1}, \eqref{1.2} satisfies the estimate
$$
	\int_1^{
		M (r; u)
	}
	(g_\theta (t) t)^{-1 / p}
	\,
	dt
	\ge
	C
	\left(
		\int_{r_0}^r
		(\xi q_\sigma (\xi))^{1 / (p - 1)}
		\,
		d\xi
	\right)^{(p - 1) / p}
$$
for all sufficiently large $r$, 
where the constant $C > 0$ depends only on 
$n$, $p$, $\theta$, $\sigma$, $C_1$, $C_2$, and $\alpha$.
\end{Corollary}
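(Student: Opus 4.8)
The plan is to deduce Corollary~\ref{C2.2} from Theorem~\ref{T2.2} by showing that the hypothesis $k \le -1$ in~\eqref{2.1} forces the quantities $f_\sigma$ and $q_\sigma$ to be comparable in the relevant integral sense, so that condition~\eqref{C2.1.1} implies~\eqref{T2.1.2} and the estimate of Theorem~\ref{T2.2} becomes the estimate claimed here.

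First I would unwind the definition of $f_\sigma$. For $r > r_0$ and $\sigma > 1$ we have, on the annulus $\Omega_{r/\sigma, \sigma r}$, the bound $|x| < \sigma r$, whence $|b(x)| \le \alpha |x|^k \le \alpha (\sigma r)^k$ when $k \le 0$ (the case $k \le -1 \le 0$ is what we need). Therefore
\begin{equation*}
	1 + r \esssup_{\Omega_{r/\sigma,\sigma r}} |b|
	\le
	1 + \alpha \sigma^k r^{k+1}
	\le
	1 + \alpha \sigma^k,
\end{equation*}
since $k + 1 \le 0$ gives $r^{k+1} \le 1$ for $r \ge 1$ (and for $r_0 < r < 1$ one absorbs the bounded factor into the constant, or simply enlarges $r_0$ to be at least $1$, which is harmless because the conclusion is only asserted for large $r$). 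Consequently
\begin{equation*}
	f_\sigma(r)
	=
	\frac{q_\sigma(r)}{1 + r \esssup_{\Omega_{r/\sigma,\sigma r}} |b|}
	\ge
	\frac{1}{1 + \alpha \sigma^k}
	\,
	q_\sigma(r),
	\qquad r \ge \max\{r_0, 1\}.
\end{equation*}
Since the denominator $1 + r \esssup |b| \ge 1$ trivially gives $f_\sigma(r) \le q_\sigma(r)$ as well, the two functions are comparable up to the constant $c_0 := (1 + \alpha \sigma^k)^{-1}$, which depends only on $\alpha$, $\sigma$, $k$ — and $k$ is fixed by the hypothesis, so effectively on $\alpha$ and $\sigma$.

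With this comparison in hand the rest is bookkeeping. From $f_\sigma(r) \ge c_0 q_\sigma(r)$ we get
\begin{equation*}
	\int_{r_0}^\infty (r f_\sigma(r))^{1/(p-1)}\, dr
	\ge
	c_0^{1/(p-1)}
	\int_{r_0}^\infty (r q_\sigma(r))^{1/(p-1)}\, dr
	=
	\infty
\end{equation*}
by~\eqref{C2.1.1}, so~\eqref{T2.1.2} holds; together with~\eqref{T2.2.1}, the hypotheses of Theorem~\ref{T2.2} are met, and every nontrivial solution of~\eqref{1.1}, \eqref{1.2} satisfies
\begin{equation*}
	\int_1^{M(r;u)} (g_\theta(t) t)^{-1/p}\, dt
	\ge
	C'
	\left(
		\int_{r_0}^r (\xi f_\sigma(\xi))^{1/(p-1)}\, d\xi
	\right)^{(p-1)/p}
\end{equation*}
for all large $r$, with $C'$ depending on $n, p, \theta, \sigma, C_1, C_2$. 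Applying $f_\sigma \ge c_0 q_\sigma$ inside the right-hand integral and pulling out $c_0^{1/(p-1)}$, then raising to the power $(p-1)/p$, yields exactly the asserted inequality with $C = C' c_0^{1/p}$, which depends only on $n, p, \theta, \sigma, C_1, C_2, \alpha$ (the dependence on the fixed $k$ being subsumed, or one can list $k$ explicitly — but since $k \le -1$ the worst constant is obtained at a definite value and one checks $c_0$ stays bounded below). I do not anticipate a genuine obstacle here: the only point requiring a little care is handling small $r$ near $r_0$ when $r_0 < 1$, and the cleanest fix is to observe that all conclusions are statements "for all sufficiently large $r$" and that enlarging $r_0$ only strengthens the divergence hypothesis~\eqref{C2.1.1}, so one may assume $r_0 \ge 1$ from the outset.
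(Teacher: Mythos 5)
Your approach is exactly the paper's: the author disposes of Corollaries~\ref{C2.1}--\ref{C2.4} in one line by noting that $k \le -1$ gives $f_\sigma(r) \ge \gamma\, q_\sigma(r)$ and then invoking Theorem~\ref{T2.2}, which is precisely your comparison-plus-bookkeeping argument. One small slip in your derivation of the constant: since $k \le -1 < 0$, the function $|x|^k$ is \emph{decreasing} in $|x|$, so on the annulus $\Omega_{r/\sigma,\sigma r}$ the correct bound is $|x|^k \le (r/\sigma)^k = \sigma^{-k} r^k$, not $|x|^k \le (\sigma r)^k$; your inequality as written goes the wrong way. The fix is cosmetic: you obtain $1 + r\,\esssup_{\Omega_{r/\sigma,\sigma r}}|b| \le 1 + \alpha\sigma^{-k} r^{k+1} \le 1 + \alpha\sigma^{-k}$ for $r \ge 1$, hence $c_0 = (1+\alpha\sigma^{-k})^{-1}$, and the rest of your argument (including the harmless reduction to $r_0 \ge 1$, justified by the divergence in~\eqref{C2.1.1} and the ``sufficiently large $r$'' formulation) goes through unchanged.
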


\begin{Corollary}\label{C2.3}
In the hypotheses of Corollary~$\ref{C2.2}$, let the condition
$$
	\int_{r_0}^\infty
	\min
	\{
		(\xi q_\sigma (\xi))^{1 / (p - 1)},
		\,
		q_\sigma^{1 / p}
		(\xi)
	\}
	\,
	d\xi
	=
	\infty
$$
be valid instead of~\eqref{C2.1.1}.
Then any nontrivial solution of~\eqref{1.1}, \eqref{1.2} satisfies the estimate
\begin{align*}
	&
	\int_1^{
		M (r; u)
	}
	(g_\theta (t) t)^{-1 / p}
	\,
	dt
	+
	\int_1^{
		M (r; u)
	}
	g_\theta^{-1  / (p - 1)}
	(t)
	\,
	dt
	\\
	&
	\quad
	\ge
	C
	\int_{r_0}^r
	\min
	\{
		(\xi q_\sigma (\xi))^{1 / (p - 1)},
		\,
		q_\sigma^{1 / p}
		(\xi)
	\}
	\,
	d\xi
\end{align*}
for all sufficiently large $r$, 
where the constant $C > 0$ depends only on 
$n$, $p$, $\theta$, $\sigma$, $C_1$, $C_2$, and $\alpha$.
\end{Corollary}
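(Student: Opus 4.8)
The plan is to deduce Corollary~\ref{C2.3} directly from Theorem~\ref{T2.3}; the only extra ingredient is a pointwise comparison between $f_\sigma$ and $q_\sigma$ that uses the structural assumption~\eqref{2.1} with $k \le -1$. Set $r_1 := \max\{r_0, \sigma\}$. For $r > r_1$ every point $x \in \Omega_{r/\sigma,\sigma r}$ satisfies $|x| > r/\sigma > 1$, so that $|x|^k = |x|^{-1}|x|^{k+1} \le |x|^{-1} < \sigma / r$ because $k \le -1$; together with~\eqref{2.1} this gives $r\,\esssup_{\Omega_{r/\sigma,\sigma r}}|b| \le \alpha\sigma$, and hence
$$
	f_\sigma (r)
	=
	\frac{q_\sigma (r)}{1 + r\,\esssup_{\Omega_{r/\sigma,\sigma r}}|b|}
	\ge
	\frac{q_\sigma (r)}{1 + \alpha\sigma},
	\qquad
	r > r_1 .
$$
This is the only point at which~\eqref{2.1} enters, which is why the constant in the conclusion will depend on $\alpha$ (and $\sigma$) but not on $k$.

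Next I would convert this into a comparison of the integrands of Theorem~\ref{T2.3} and of the present statement. Multiplying the last inequality by $\xi$ and raising it to the power $1/(p-1)$, and then using the elementary bound $\min\{\lambda a, b\} \ge \min\{\lambda, 1\}\min\{a, b\}$ (valid for $a, b \ge 0$ and $\lambda > 0$), I obtain with $c := \min\{(1 + \alpha\sigma)^{-1/(p-1)}, 1\} \in (0, 1]$ that
$$
	\min
	\{
		(\xi f_\sigma (\xi))^{1/(p-1)},
		\,
		q_\sigma^{1/p}(\xi)
	\}
	\ge
	c\,
	\min
	\{
		(\xi q_\sigma (\xi))^{1/(p-1)},
		\,
		q_\sigma^{1/p}(\xi)
	\}
	\qquad
	\text{for all } \xi > r_1 .
$$
Since $q \in L_{\infty, loc}(\Omega)$, the function $q_\sigma$ is bounded on every bounded subinterval of $(r_0, \infty)$, so the right-hand side integrand is integrable on $(r_0, r_1)$; in view of the divergence assumed in Corollary~\ref{C2.3}, the displayed bound then forces~\eqref{T2.3.1}, while~\eqref{T2.2.1} is inherited from Corollary~\ref{C2.2}. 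Thus all hypotheses of Theorem~\ref{T2.3} are satisfied.

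Applying Theorem~\ref{T2.3} and then this integrand comparison, I would conclude as follows. Writing $\int_{r_0}^r = \int_{r_0}^{r_1} + \int_{r_1}^r$, the first integral equals a finite constant $K$, and the second is bounded below by $c\int_{r_1}^r \min\{(\xi q_\sigma (\xi))^{1/(p-1)}, q_\sigma^{1/p}(\xi)\}\, d\xi$; since $\int_{r_0}^\infty \min\{(\xi q_\sigma (\xi))^{1/(p-1)}, q_\sigma^{1/p}(\xi)\}\, d\xi = \infty$, for all sufficiently large $r$ one has $K \le \tfrac12\int_{r_0}^r \min\{(\xi q_\sigma (\xi))^{1/(p-1)}, q_\sigma^{1/p}(\xi)\}\, d\xi$, and absorbing the initial piece yields the asserted estimate with a new constant $C$ (equal to the constant of Theorem~\ref{T2.3} times $c/2$), which depends only on $n, p, \theta, \sigma, C_1, C_2, \alpha$. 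I do not expect any genuine obstacle: this is a routine majorization/minorization reduction. The only points that call for mild care are verifying that $k \le -1$ really produces a \emph{uniform} (not merely $r$-dependent) bound on $r\,\esssup_{\Omega_{r/\sigma,\sigma r}}|b|$, and the bookkeeping just indicated, which transfers a pointwise inequality valid for large $\xi$ to an inequality between the integrals starting at $r_0$.
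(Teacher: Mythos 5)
Your proof is correct and follows essentially the same route as the paper, which disposes of Corollaries~\ref{C2.1}--\ref{C2.4} in one line by noting that $k\le -1$ in~\eqref{2.1} forces $f_\sigma(r)\ge\gamma q_\sigma(r)$ (with $\gamma$ depending only on $\alpha$, $k$, $\sigma$) and then invoking Theorem~\ref{T2.3}. You merely supply the routine details (the uniform bound $r\,\esssup_{\Omega_{r/\sigma,\sigma r}}|b|\le\alpha\sigma$ for large $r$, the passage through the minimum, and the absorption of the initial segment of the integral), all of which check out.
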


\begin{Corollary}\label{C2.4}
In formula~\eqref{2.1}, let $k \le -1$.
If there are real numbers $\theta > 1$ and $\sigma > 1$
such that~\eqref{T2.1.1} is valid and
$$
	\int_{r_0}^\infty
	(r q_\sigma (r))^{1 / (p - 1)}
	\,
	dr
	<
	\infty,
$$
then any solution of~\eqref{1.1}, \eqref{1.2} satisfies the estimate
$$
	\int_{
		M (r; u)
	}^\infty
	(g_\theta (t) t)^{-1 / p}
	\,
	dt
	\ge
	C
	\left(
		\int_r^\infty
		(\xi q_\sigma (\xi))^{1 / (p - 1)}
		\,
		d\xi
	\right)^{(p - 1) / p}
$$
for all sufficiently large $r$, 
where the constant $C > 0$ depends only on 
$n$, $p$, $\theta$, $\sigma$, $C_1$, $C_2$, and $\alpha$.
\end{Corollary}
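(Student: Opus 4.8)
The plan is to deduce Corollary~\ref{C2.4} directly from Theorem~\ref{T2.4} by comparing $f_\sigma$ with $q_\sigma$ under the assumption $k \le -1$. First I note that $1 + r\,\esssup_{\Omega_{r/\sigma,\sigma r}} |b| \ge 1$, so $f_\sigma(r) \le q_\sigma(r)$ for all $r > r_0$; hence the hypothesis $\int_{r_0}^\infty (r q_\sigma(r))^{1/(p-1)}\,dr < \infty$ gives $\int_{r_0}^\infty (r f_\sigma(r))^{1/(p-1)}\,dr < \infty$, and, together with~\eqref{T2.1.1}, all the hypotheses of Theorem~\ref{T2.4} are fulfilled.

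Next I would establish the reverse comparison for large $r$. For $r \ge \sigma$, every $x \in \Omega_{r/\sigma,\sigma r}$ satisfies $|x| > r/\sigma \ge 1$; since $k \le -1$, this yields $|x|^k \le |x|^{-1} \le \sigma/r$, so by~\eqref{2.1} we get $\esssup_{\Omega_{r/\sigma,\sigma r}} |b| \le \alpha\sigma/r$ and therefore $1 + r\,\esssup_{\Omega_{r/\sigma,\sigma r}} |b| \le 1 + \alpha\sigma$. Consequently
$$
	f_\sigma(r) \ge \frac{q_\sigma(r)}{1 + \alpha\sigma}, \qquad r \ge \sigma .
$$

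Finally I would apply Theorem~\ref{T2.4}, which provides a constant $C' > 0$ depending only on $n, p, \theta, \sigma, C_1, C_2$ with
$$
	\int_{M(r;u)}^\infty (g_\theta(t) t)^{-1/p}\,dt \ge C' \left( \int_r^\infty (\xi f_\sigma(\xi))^{1/(p-1)}\,d\xi \right)^{(p-1)/p}
$$
for all sufficiently large $r$, substitute the bound $f_\sigma \ge (1+\alpha\sigma)^{-1} q_\sigma$ on the right, and pull the constant factor $(1+\alpha\sigma)^{-1/(p-1)}$ out of the integral and through the exponent $(p-1)/p$. This produces the claimed estimate with $C = C'(1+\alpha\sigma)^{-1/p}$, which depends only on $n, p, \theta, \sigma, C_1, C_2, \alpha$.

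This is a routine reduction; the only subtlety is that the bound on $1 + r\,\esssup_{\Omega_{r/\sigma,\sigma r}}|b|$ must be uniform in $r$ rather than merely finite for each individual $r$, which is precisely where the restriction $k \le -1$ (as opposed to $k < 0$) enters. I do not expect any further obstacle beyond this bookkeeping and the choice of a threshold for $r$ that simultaneously exceeds $\sigma$ and is large enough for Theorem~\ref{T2.4} to apply.
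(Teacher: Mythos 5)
Your proposal is correct and follows essentially the same route as the paper, which likewise deduces Corollaries~\ref{C2.1}--\ref{C2.4} from Theorems~\ref{T2.1}--\ref{T2.4} via the comparison $f_\sigma(r) \ge \gamma\, q_\sigma(r)$ for $k \le -1$. Your version is in fact slightly more careful than the paper's one-line justification: you make explicit both the trivial bound $f_\sigma \le q_\sigma$ needed to verify the convergence hypothesis of Theorem~\ref{T2.4} and the restriction to $r \ge \sigma$ ensuring the uniform bound on the denominator, neither of which causes any difficulty since the conclusion is only asserted for sufficiently large $r$.
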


Proof of Corollaries~\ref{C2.1}--\ref{C2.4} 
follows immediately from Teorems~\ref{T2.1}--\ref{T2.4}.
Really, if $k \le -1$ in~\eqref{2.1}, then 
$f_\sigma (r) \ge \gamma q_\sigma (r)$
for all $r > r_0$ and $\sigma > 1$,
where the constant $\gamma > 0$ depends only on $\alpha$, $k$, and $\sigma$.

In the case of $b = 0$, the above statements imply results of paper~\cite{MeDM}. 
Examples given in this paper demonstrate us the precision of Corollaries~\ref{C2.1}--\ref{C2.4}.

\begin{Corollary}\label{C2.5}
In formula~\eqref{2.1}, let $k > - 1$. 
If there are real numbers $\theta > 1$ and $\sigma > 1$ such that~\eqref{T2.1.1} is valid and
\begin{equation}
	\int_{r_0}^\infty
	(r^{-k} q_\sigma (r))^{1 / (p - 1)}
	\,
	dr
	=
	\infty,
	\label{C2.5.1}
\end{equation}
then any solution of~\eqref{1.1}, \eqref{1.2} is trivial. 
\end{Corollary}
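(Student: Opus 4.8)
The plan is to derive Corollary~\ref{C2.5} from Theorem~\ref{T2.1}, following the same scheme by which Corollaries~\ref{C2.1}--\ref{C2.4} were obtained from Theorems~\ref{T2.1}--\ref{T2.4}. The one new feature is that, for $k > -1$, the quantity $1 + r \, \esssup |b|$ in the denominator of $f_\sigma$ is no longer bounded on $(r_0, \infty)$; it grows at most like $r^{k+1}$. Hence, in place of the bound $f_\sigma(r) \ge \gamma q_\sigma(r)$ available when $k \le -1$, I will only get $f_\sigma(r) \ge \gamma r^{-(k+1)} q_\sigma(r)$, and this is exactly what turns~\eqref{C2.5.1} into~\eqref{T2.1.2}.

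First I would estimate the weight. On $\Omega_{r/\sigma, \sigma r}$ one has $r/\sigma \le |x| \le \sigma r$, so~\eqref{2.1} gives
$$
	\esssup_{\Omega_{r / \sigma, \sigma r}} |b|
	\le
	\alpha \sigma^{|k|} r^k ,
	\qquad
	r > r_0
$$
(treating separately the cases $k \ge 0$ and $-1 < k < 0$ when estimating $|x|^k$ on the annulus). I may assume without loss of generality that $r_0 \ge 1$: enlarging $r_0$ keeps all the hypotheses valid, does not alter the conclusion, and does not affect the divergence of the improper integral in~\eqref{C2.5.1}. Then, since $k + 1 > 0$ implies $r^{k+1} \ge 1$ for $r > r_0$, it follows that
$$
	1
	+
	r \, \esssup_{\Omega_{r / \sigma, \sigma r}} |b|
	\le
	1 + \alpha \sigma^{|k|} r^{k + 1}
	\le
	(1 + \alpha \sigma^{|k|}) r^{k + 1} ,
	\qquad
	r > r_0 ,
$$
and therefore $f_\sigma(r) \ge \gamma r^{-(k + 1)} q_\sigma(r)$ for all $r > r_0$, where $\gamma = (1 + \alpha \sigma^{|k|})^{-1} > 0$ depends only on $\alpha$, $k$, and $\sigma$. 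Multiplying by $r$ and raising to the power $1/(p-1)$, I obtain
$$
	\bigl( r f_\sigma(r) \bigr)^{1 / (p - 1)}
	\ge
	\gamma^{1 / (p - 1)} \bigl( r^{-k} q_\sigma(r) \bigr)^{1 / (p - 1)} ,
	\qquad
	r > r_0 .
$$

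Integrating this inequality over $(r_0, \infty)$ and invoking~\eqref{C2.5.1}, I conclude that~\eqref{T2.1.2} holds with the same $\theta$ and $\sigma$; since~\eqref{T2.1.1} is assumed, Theorem~\ref{T2.1} then yields that every solution of~\eqref{1.1}, \eqref{1.2} is trivial, which is the assertion to be proved. I do not anticipate any genuine obstacle here --- the argument is a routine perturbation of the $k \le -1$ case noted in the paper after Corollary~\ref{C2.4}. The only point needing slight care is the absorption of the constant $1$ in the denominator of $f_\sigma$ into the factor $r^{k+1}$, which is where the hypothesis $k > -1$ (together with the innocuous normalization $r_0 \ge 1$) is used; the degenerate case $\alpha = 0$ requires no separate discussion, since then $\gamma = 1$ and the displayed bound simply reads $r f_\sigma(r) = r q_\sigma(r) \ge r^{-k} q_\sigma(r)$ for $r \ge 1$.
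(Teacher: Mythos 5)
Your proposal is correct and follows exactly the paper's own argument: the paper likewise deduces $f_\sigma(r) \ge \gamma r^{-k-1} q_\sigma(r)$ from $k > -1$ and~\eqref{2.1}, so that~\eqref{C2.5.1} implies~\eqref{T2.1.2}, and then invokes Theorem~\ref{T2.1}. You have merely filled in the (routine and correctly executed) details of the annulus estimate for $\esssup |b|$ and the absorption of the additive constant $1$ for large $r$.
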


\begin{proof}
The condition $k > - 1$ implies the inequality 
$f_\sigma (r) \ge \gamma r^{- k - 1} q_\sigma (r)$
for all sufficiently large $r$,
where the constant $\gamma > 0$ depends only on $\alpha$, $k$, and $\sigma$.
Thus, to complete the proof, it remains to use Theorem~\ref{T2.1}.
\end{proof}

\begin{Example}\label{E2.1}
Consider the inequality
\begin{equation}
	\diver (|D u|^{p-2} D u)
	+
	b (x) |D u|^{p-1}
	\ge
	q (x)
	u^\lambda
	\quad
	\mbox{in } {\mathbb R^n},
	\label{E2.1.1}
\end{equation}
where
$b \in L_{\infty, loc} ({\mathbb R^n})$ 
satisfies relation~\eqref{2.1} with $k > -1$ and 
$q \in L_{\infty, loc} ({\mathbb R^n})$
is a non-negative function such that
\begin{equation}
	q (x) \sim |x|^l
	\quad
	\mbox{as } x \to \infty,
	\label{E2.1.2}
\end{equation}
i.e. there exist constants $\alpha_1 > 0$ and $\alpha_2 > 0$ such that
$$
	\alpha_1 |x|^l \le q (x) \le \alpha_2 |x|^l
$$
for almost all $x$ in a neighborhood of infinity.

By Corollary~\ref{C2.5}, in the case of
\begin{equation}
	\lambda > p - 1
	\quad
	\mbox{and}
	\quad
	l \ge k - p + 1,
	\label{E2.1.3}
\end{equation}
any non-negative solution of~\eqref{E2.1.1} is trivial.

At the same time, if
$$
	\lambda > p - 1
	\quad
	\mbox{and}
	\quad
	l < k - p + 1,
$$
then 
$$
	u (x)
	=
	(
		\max
		\{
			|x|, 
			r_0 
		\}
	)^{
		(k - p + 1 - l) / (\lambda - p + 1)
	}
$$
is a positive solution of~\eqref{E2.1.1} 
for enough large $r_0$
with a non-negative function
$q \in L_{\infty, loc} ({\mathbb R^n})$ 
satisfying condition~\eqref{E2.1.2}
and a non-negative function
$b \in L_{\infty, loc} ({\mathbb R^n})$
such that
\begin{equation}
	b (x) \sim |x|^k
	\quad
	\mbox{as } x \to \infty.
	\label{E2.1.4}
\end{equation}

Using somewhat more complex reasoning, we can also show that~\eqref{E2.1.1} 
has a positive solution for all non-negative functions
$b \in L_{\infty, loc} ({\mathbb R^n})$ 
and
$q \in L_{\infty, loc} ({\mathbb R^n})$ 
if the first inequality in formula~\eqref{E2.1.3} does not hold.

Thus, both inequalities in~\eqref{E2.1.3} are exact.
\end{Example}

\begin{Example}\label{E2.2}
In~\eqref{E2.1.1}, let the non-negative function
$q \in L_{\infty, loc} ({\mathbb R^n})$
satisfy the relation
\begin{equation}
	q (x)
	\sim
	|x|^{k - p + 1}
	\log^\mu |x|
	\quad
	\mbox{as } x \to \infty.
	\label{E2.2.1}
\end{equation}
As in Example~\ref{E2.1}, we assume that~\eqref{2.1} is fulfilled with $k > -1$.

According to Corollary~\ref{C2.5}, if
\begin{equation}
	\lambda > p - 1
	\quad
	\mbox{and}
	\quad
	\mu \ge 1 - p,
	\label{E2.2.2}
\end{equation}
then any non-negative solution of~\eqref{E2.1.1} is trivial.

As noted above, the first inequality in formula~\eqref{E2.2.2} is exact.
Let us show that the second one is exact too.
In fact, if
$$
	\lambda > p - 1
	\quad
	\mbox{and}
	\quad
	\mu < 1 - p,
$$
then
$$
	u (x)
	=
	(
		\log
		\max
		\{
			|x|, 
			r_0 
		\}
	)^{
		(1 - p - \mu) / (\lambda - p + 1)
	}
$$
is a positive solution of~\eqref{E2.1.1}
for enough large $r_0 > 0$
with some non-negative functions
$b \in L_{\infty, loc} ({\mathbb R^n})$ 
and
$q \in L_{\infty, loc} ({\mathbb R^n})$
satisfying conditions~\eqref{E2.1.4} and~\eqref{E2.2.1}, respectively.
\end{Example}

\begin{Example}\label{E2.3}
Consider the inequality
\begin{equation}
	\diver (|D u|^{p-2} D u)
	+
	b (x) |D u|^{p-1}
	\ge
	q (x)
	u^{p - 1}
	\log^\lambda (1 + u)
	\quad
	\mbox{in } {\mathbb R^n},
	\label{E2.3.1}
\end{equation}
where
$b \in L_{\infty, loc} ({\mathbb R^n})$ 
satisfies relation~\eqref{2.1} with $k > -1$ and 
$q \in L_{\infty, loc} ({\mathbb R^n})$
is a non-negative function such that~\eqref{E2.1.2} holds.

By Corollary~\ref{C2.5}, the conditions
\begin{equation}
	\lambda > p
	\quad
	\mbox{and}
	\quad
	l \ge k - p + 1,
	\label{E2.3.2}
\end{equation}
imply that any non-negative solution of~\eqref{E2.3.1} is trivial.

On the other hand, if
$$
	\lambda > p
	\quad
	\mbox{and}
	\quad
	l < k - p + 1,
$$
then
$$
	u (x)
	=
	e^{
		(\max \{ |x|, r_0 \})^{
			(k - p + 1 - l) / (\lambda - p +1)
		}
	}
$$
is a positive solution of~\eqref{E2.3.1}
for enough large $r_0$
with some non-negative functions
$q \in L_{\infty, loc} ({\mathbb R^n})$ 
and
$b \in L_{\infty, loc} ({\mathbb R^n})$
satisfying relations~\eqref{E2.1.2} and~\eqref{E2.1.4}, respectively.
Therefore, the second inequality in formula~\eqref{E2.3.2} is exact.

The first inequality in~\eqref{E2.3.2} is also exact. 
Namely, in the case of $\lambda \le p$, it can be shown that~\eqref{E2.3.1} 
has a positive solution for all non-negative functions
$b \in L_{\infty, loc} ({\mathbb R^n})$ 
and
$q \in L_{\infty, loc} ({\mathbb R^n})$.
\end{Example}

\begin{Corollary}\label{C2.7}
Let~\eqref{2.1} be valid, where $k > -1$. 
Also suppose that conditions~\eqref{T2.2.1} and~\eqref{C2.5.1} hold
for some real numbers $\theta > 1$ and $\sigma > 1$.
Then any nontrivial solution of~\eqref{1.1}, \eqref{1.2} satisfies the estimate
$$
	\int_1^{
		M (r; u)
	}
	(g_\theta (t) t)^{- 1 / p}
	\,
	dt
	\ge
	C
	\left(
		\int_{r_0}^r
		(\xi^{- k} q_\sigma (\xi))^{1 / (p - 1)}
		\,
		d\xi
	\right)^{(p - 1) / p}
$$
for all sufficiently large $r$, 
where the constant $C > 0$ depends only on 
$n$, $p$, $\theta$, $\sigma$, $C_1$, $C_2$, $k$, and $\alpha$.
\end{Corollary}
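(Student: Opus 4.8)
The plan is to derive the estimate directly from Theorem~\ref{T2.2}, in the same way Corollary~\ref{C2.2} is obtained from it; the only extra ingredient is the elementary comparison between $f_\sigma$ and $q_\sigma$ valid when $k > -1$. First I would record the pointwise bound already used in the proof of Corollary~\ref{C2.5}. Since $|b(x)| \le \alpha |x|^k$ with $k > -1$, for $x \in \Omega_{r/\sigma, \sigma r}$ one has $|x|^k \le \max\{\sigma^k, \sigma^{-k}\}\, r^k$, whence
$$
	1 + r \, \esssup_{\Omega_{r/\sigma, \sigma r}} |b|
	\le
	1 + \alpha \max\{\sigma^k, \sigma^{-k}\}\, r^{k+1}
	\le
	C_3 \, r^{k+1}
$$
for all sufficiently large $r$ (here $k+1 > 0$ is used). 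Consequently
$$
	f_\sigma (r)
	=
	\frac{\essinf_{\Omega_{r/\sigma, \sigma r}} q}{1 + r \, \esssup_{\Omega_{r/\sigma, \sigma r}} |b|}
	\ge
	\gamma \, r^{-k-1} q_\sigma (r)
$$
for all large $r$, with $\gamma > 0$ depending only on $\alpha$, $k$, and $\sigma$. Multiplying by $r$ and raising to the power $1/(p-1)$ gives
$$
	(\xi f_\sigma (\xi))^{1/(p-1)}
	\ge
	\gamma^{1/(p-1)} (\xi^{-k} q_\sigma (\xi))^{1/(p-1)}
	\qquad \text{for all } \xi \ge r_1,
$$
where $r_1 > r_0$ is chosen large enough.

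Next I would verify the hypotheses of Theorem~\ref{T2.2}. Condition~\eqref{T2.2.1} is assumed outright. Integrating the last display over $(r_1, \infty)$ and invoking~\eqref{C2.5.1} shows $\int_{r_0}^\infty (\xi f_\sigma (\xi))^{1/(p-1)}\, d\xi = \infty$, i.e.~\eqref{T2.1.2} holds. Hence Theorem~\ref{T2.2} applies to any nontrivial solution $u$ of~\eqref{1.1}, \eqref{1.2} and yields
$$
	\int_1^{M(r;u)} (g_\theta (t) t)^{-1/p}\, dt
	\ge
	C_0 \left( \int_{r_0}^r (\xi f_\sigma (\xi))^{1/(p-1)}\, d\xi \right)^{(p-1)/p}
$$
for all sufficiently large $r$, with $C_0$ depending only on $n, p, \theta, \sigma, C_1, C_2$.

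Finally I would bound the right-hand side from below. For $r > r_1$,
$$
	\int_{r_0}^r (\xi f_\sigma (\xi))^{1/(p-1)}\, d\xi
	\ge
	\int_{r_1}^r (\xi f_\sigma (\xi))^{1/(p-1)}\, d\xi
	\ge
	\gamma^{1/(p-1)} \int_{r_1}^r (\xi^{-k} q_\sigma (\xi))^{1/(p-1)}\, d\xi ,
$$
and, since $\int_{r_0}^\infty (\xi^{-k} q_\sigma (\xi))^{1/(p-1)}\, d\xi = \infty$ by~\eqref{C2.5.1}, the last integral is at least $\tfrac12 \int_{r_0}^r (\xi^{-k} q_\sigma (\xi))^{1/(p-1)}\, d\xi$ once $r$ is large enough. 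Combining this with the preceding display and taking $C = C_0 (\gamma/2)^{(p-1)/p}$ (so that $C$ now depends additionally on $k$ and $\alpha$) gives the asserted inequality for all sufficiently large $r$.

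I do not expect any genuine obstacle here: the argument is precisely the scheme by which Corollaries~\ref{C2.1}--\ref{C2.4} follow from Theorems~\ref{T2.1}--\ref{T2.4}. The only point requiring a little care is that the comparison $f_\sigma (r) \ge \gamma r^{-k-1} q_\sigma (r)$ holds only for large $r$, so one must pass from $\int_{r_0}^r$ to $\int_{r_1}^r$ and reabsorb the discarded bounded piece using the divergence in~\eqref{C2.5.1} — this costs nothing beyond a harmless change of the constant.
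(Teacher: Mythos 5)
Your proposal is correct and follows exactly the paper's route: the comparison $f_\sigma(r) \ge \gamma\, r^{-k-1} q_\sigma(r)$ for large $r$ (as in Corollary~\ref{C2.5}) followed by an application of Theorem~\ref{T2.2}, with the discarded initial segment reabsorbed via the divergence in~\eqref{C2.5.1}. The paper states this in one line; you have merely supplied the routine details, and they are all in order.
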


\begin{proof}
We repeat the arguments given in the proof of Corollary~\ref{C2.5}
with Theorem~\ref{T2.1} replaced by Theorem~\ref{T2.2}.
\end{proof}

\begin{Example}\label{E2.5}
Consider inequality~\eqref{E2.1.1}, 
where $\lambda < p - 1$,
$b \in L_{\infty, loc} ({\mathbb R^n})$,
and $q \in L_{\infty, loc} ({\mathbb R^n})$ is a non-negative function.
We shall assume that condition~\eqref{2.1} is fulfilled for some $k > -1$.

Let~\eqref{E2.1.2} hold, where $l > k - p + 1$,
then in accordance with Corollary~\ref{C2.7} any nontrivial solution $u \ge 0$ of~\eqref{E2.1.1}
satisfies the estimate
\begin{equation}
	M (r; u)
	\ge
	C
	r^{
		(l - k + p - 1) / (p - 1 - \lambda)
	}
	\label{E2.5.1}
\end{equation}
for all enough large $r$,
where the constant $C > 0$ does not depend of $r$ and $u$.

If we replace~\eqref{E2.1.2} by~\eqref{E2.2.1},
where $\mu > 1 - p$, then~\eqref{E2.5.1} should be replaced by 
$$
	M (r; u)
	\ge
	C
	\log^{
		(\mu + p - 1) / (p - 1 - \lambda)
	}
	r
$$
for all enough large $r$,
where the constant $C > 0$ does not depend of $r$ and $u$.
\end{Example}

\begin{Example}\label{E2.6}
Assume that $u \ge 0$ is a nontrivial solution of the inequality
$$
	\diver (|D u|^{p-2} D u)
	+
	b (x) |D u|^{p-1}
	\ge
	q (x)
	u^\lambda
	\log^s (1 + u)
	\quad
	\mbox{in } {\mathbb R^n},
$$
where $\lambda < p - 1$, $s \in {\mathbb R}$,
$b \in L_{\infty, loc} ({\mathbb R^n})$ 
and, moreover, condition~\eqref{2.1} is valid for some $k > -1$.
Also let
$q \in L_{\infty, loc} ({\mathbb R^n})$
be a non-negative function such that~\eqref{E2.1.2} holds with $l > k - p + 1$.
Then, according to Corollary~\ref{C2.7}, we have
$$
	M (r; u)
	\ge
	C
	r^{
		(l - k + p -1) / (p - 1 - \lambda)
	}
	\log^{s / (p - 1 - \lambda)}
	r
$$
for all enough large $r$, where the constant $C > 0$ does not depend of $r$ and $u$.
\end{Example}

\begin{Corollary}\label{C2.8}
In the hypotheses of Corollary~$\ref{C2.7}$, let the condition
$$
	\int_{r_0}^\infty
	\min
	\{
		(\xi^{- k} q_\sigma (\xi))^{1 / (p - 1)},
		\,
		q_\sigma^{1 / p}
		(\xi)
	\}
	\,
	d\xi
	=
	\infty,
$$
be valid instead of~\eqref{C2.5.1}. 
Then any nontrivial solution of~\eqref{1.1}, \eqref{1.2} satisfies the estimate
\begin{align*}
	&
	\int_1^{
		M (r; u)
	}
	(g_\theta (t) t)^{-1 / p}
	\,
	dt
	+
	\int_1^{
		M (r; u)
	}
	g_\theta^{-1 / (p - 1)}
	(t)
	\,
	dt
	\\
	&
	\quad
	\ge
	C
	\int_{r_0}^r
	\min
	\{
		(\xi^{- k} q_\sigma (\xi))^{1 / (p - 1)},
		\,
		q_\sigma^{1 / p}
		(\xi)
	\}
	\,
	d\xi
\end{align*}
for all sufficiently large $r$, 
where the constant $C > 0$ depends only on 
$n$, $p$, $\theta$, $\sigma$, $C_1$, $C_2$, $k$, and $\alpha$.
\end{Corollary}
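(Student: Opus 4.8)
The plan is to obtain Corollary~\ref{C2.8} from Theorem~\ref{T2.3} by the same device that produced Corollaries~\ref{C2.5} and~\ref{C2.7}, namely by translating the hypothesis~\eqref{2.1} on $b$ into a pointwise lower bound for $f_\sigma$ in terms of $q_\sigma$. Concretely, when $k > -1$ we have $\esssup_{\Omega_{r/\sigma, \sigma r}} |b| \le \alpha \sigma^{|k|} r^k$, so that
$$
	1 + r \esssup_{\Omega_{r/\sigma, \sigma r}} |b|
	\le
	1 + \alpha \sigma^{|k|} r^{k+1}
	\le
	2 \alpha \sigma^{|k|} r^{k+1}
$$
for all sufficiently large $r$ (since $k+1 > 0$), whence $f_\sigma (r) \ge \gamma r^{-k-1} q_\sigma (r)$ for large $r$ with $\gamma > 0$ depending only on $\alpha$, $k$, $\sigma$; shrinking $\gamma$ if needed, we may assume $\gamma^{1/(p-1)} \le 1$.

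First I would verify that Theorem~\ref{T2.3} is applicable with the very same $\theta$ and $\sigma$. Condition~\eqref{T2.2.1} is already part of the hypotheses (it is inherited from Corollary~\ref{C2.7}). To get~\eqref{T2.3.1}, I would combine the bound $(\xi f_\sigma (\xi))^{1/(p-1)} \ge \gamma^{1/(p-1)} (\xi^{-k} q_\sigma (\xi))^{1/(p-1)}$ with the elementary fact that $\min \{ c a, b \} \ge c \min \{ a, b \}$ whenever $0 < c \le 1$; this gives
$$
	\min
	\{
		(\xi f_\sigma (\xi))^{1/(p-1)}, \, q_\sigma^{1/p} (\xi)
	\}
	\ge
	\gamma^{1/(p-1)}
	\min
	\{
		(\xi^{-k} q_\sigma (\xi))^{1/(p-1)}, \, q_\sigma^{1/p} (\xi)
	\}
$$
for all large $\xi$, so the divergence assumed in Corollary~\ref{C2.8} forces~\eqref{T2.3.1}.

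Having checked the hypotheses, I would apply Theorem~\ref{T2.3} to get, for all sufficiently large $r$,
$$
	\int_1^{M (r; u)} (g_\theta (t) t)^{-1/p} \, dt
	+
	\int_1^{M (r; u)} g_\theta^{-1/(p-1)} (t) \, dt
	\ge
	C
	\int_{r_0}^r
	\min
	\{
		(\xi f_\sigma (\xi))^{1/(p-1)}, \, q_\sigma^{1/p} (\xi)
	\}
	\, d\xi,
$$
and then replace $f_\sigma$ by $q_\sigma$ on the right. Choosing $R_0 > r_0$ beyond which the pointwise comparison of the first paragraph holds, I would bound the right-hand integral from below by $\gamma^{1/(p-1)} \int_{R_0}^r \min \{ (\xi^{-k} q_\sigma (\xi))^{1/(p-1)}, q_\sigma^{1/p} (\xi) \} \, d\xi$, and then — using that $\int_{r_0}^\infty$ of this last integrand diverges — note that $\int_{R_0}^r (\cdots) \, d\xi \ge \frac{1}{2} \int_{r_0}^r (\cdots) \, d\xi$ for all large $r$. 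Absorbing the numerical factors into a new constant $C$ depending only on $n$, $p$, $\theta$, $\sigma$, $C_1$, $C_2$, $k$, $\alpha$ then yields the claimed estimate.

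I anticipate no genuine obstacle here, since the argument merely adapts the passage from Theorem~\ref{T2.2} to Corollary~\ref{C2.7}; the one point that will need care is the last step, where one must ensure that passing from the comparison ``valid for large $\xi$'' to the integral starting at $r_0$ costs only a fixed multiplicative constant, so that the conclusion holds for \emph{all} sufficiently large $r$ with a single constant $C$ rather than merely asymptotically.
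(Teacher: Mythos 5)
Your proposal is correct and follows essentially the same route as the paper, which simply invokes the comparison $f_\sigma (r) \ge \gamma r^{-k-1} q_\sigma (r)$ (valid for large $r$ when $k > -1$) and then applies Theorem~\ref{T2.3}; your additional care with the $\min$ and with the passage from ``large $\xi$'' to the integral from $r_0$ fills in details the paper leaves implicit.
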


\begin{proof}
We repeat the arguments given in the proof of Corollary~\ref{C2.5}
with Theorem~\ref{T2.1} replaced by Theorem~\ref{T2.3}.
\end{proof}

\begin{Example}\label{E2.7}
Consider inequality~\eqref{E2.1.1} with the critical exponent $\lambda = p - 1$.
As in Example~\ref{E2.6}, we assume that
$b \in L_{\infty, loc} ({\mathbb R^n})$,
$q \in L_{\infty, loc} ({\mathbb R^n})$
is a non-negative function and, moreover, 
conditions~\eqref{2.1} and~\eqref{E2.1.2} are valid, 
where $k > -1$ and $l > k - p + 1$, respectively.

Let $u \ge 0$ be a nontrial solution of~\eqref{E2.1.1}.
In the case of $l \le p k$, applying Corollary~\ref{C2.8}, we have
\begin{equation}
	M (r; u)
	\ge
	e^{
		C r^{
			(l - k + p -1) / (p - 1)
		}
	}
	\label{E2.7.1}
\end{equation}
for all enough large $r$,
where the constant $C > 0$ does not depend of $r$ and $u$.
In turn, if $l > p k$, then~\eqref{E2.7.1} should be replaced by
$$
	M (r; u)
	\ge
	e^{
		C r^{
			(l + p) / p
		}
	}
$$
for all enough large $r$,
where the constant $C > 0$ does not depend of $r$ and $u$.
\end{Example}

\begin{Corollary}\label{C2.9}
Let~\eqref{2.1} be valid, where $k > -1$.
If there are real numbers $\theta > 1$ and $\sigma > 1$
such that~\eqref{T2.1.1} holds and
$$
	\int_{r_0}^\infty
	(r^{-k} q_\sigma (r))^{1 / (p - 1)}
	\,
	dr
	<
	\infty,
$$
then any solution of~\eqref{1.1}, \eqref{1.2} satisfies the estimate
$$
	\int_{
		M (r; u)
	}^\infty
	(g_\theta (t) t)^{- 1 / p}
	\,
	dt
	\ge
	C
	\left(
		\int_r^\infty
		(\xi^{- k} q_\sigma (\xi))^{1 / (p - 1)}
		\,
		d\xi
	\right)^{(p - 1) / p}
$$
for all sufficiently large $r$, 
where the constant $C > 0$ depends only on 
$n$, $p$, $\theta$, $\sigma$, $C_1$, $C_2$, $k$, and $\alpha$.
\end{Corollary}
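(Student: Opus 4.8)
The plan is to follow the template already used for Corollaries~\ref{C2.5}, \ref{C2.7}, and~\ref{C2.8}: convert the structural hypothesis~\eqref{2.1} into a pointwise lower bound for $f_\sigma$ in terms of $q_\sigma$, and then invoke Theorem~\ref{T2.4}. Since $|b (x)| \le \alpha |x|^k$ for almost all $x$, on each annulus $\Omega_{r / \sigma, \sigma r}$ one has $\esssup_{\Omega_{r / \sigma, \sigma r}} |b| \le \beta r^k$ with $\beta = \alpha \sigma^{|k|}$; because $k > - 1$, this gives
$$
	1 + r \esssup_{\Omega_{r / \sigma, \sigma r}} |b|
	\le
	(1 + \beta) r^{k + 1}
$$
for all $r \ge \max \{ r_0, 1 \}$, hence $f_\sigma (r) \ge \gamma r^{- k - 1} q_\sigma (r)$ and therefore $\xi f_\sigma (\xi) \ge \gamma \xi^{- k} q_\sigma (\xi)$ for all large $\xi$, where $\gamma = (1 + \beta)^{-1}$ depends only on $\alpha$, $k$, $\sigma$. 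Consequently
$$
	\int_r^\infty
	(\xi f_\sigma (\xi))^{1 / (p - 1)}
	\, d\xi
	\ge
	\gamma^{1 / (p - 1)}
	\int_r^\infty
	(\xi^{- k} q_\sigma (\xi))^{1 / (p - 1)}
	\, d\xi
$$
for all sufficiently large $r$.

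Unlike in Corollary~\ref{C2.4}, where the hypothesis is stated directly in terms of $q_\sigma$ and the trivial bound $f_\sigma \le q_\sigma$ already does the job, the assumption $\int_{r_0}^\infty (r^{- k} q_\sigma (r))^{1 / (p - 1)} \, dr < \infty$ does not by itself force the finiteness of $\int_{r_0}^\infty (r f_\sigma (r))^{1 / (p - 1)} \, dr$ required by Theorem~\ref{T2.4}, so I would argue by dichotomy. If this last integral is finite, then Theorem~\ref{T2.4} applies (its remaining hypothesis~\eqref{T2.1.1} being assumed), and substituting the tail estimate above into~\eqref{T2.4.1} yields the assertion with $C = C' \gamma^{1 / p}$, where $C'$ is the constant of Theorem~\ref{T2.4} and depends only on $n$, $p$, $\theta$, $\sigma$, $C_1$, $C_2$. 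If instead $\int_{r_0}^\infty (r f_\sigma (r))^{1 / (p - 1)} \, dr = \infty$, then both~\eqref{T2.1.1} and~\eqref{T2.1.2} hold, so Theorem~\ref{T2.1} forces $u = 0$ almost everywhere and hence $M (r; u) = 0$ for every $r$; since $g$ is continuous and positive on $(0, \infty)$, the infimum of $g$ over any compact subinterval of $(0, \infty)$ is positive, so $g_\theta (t) \in (0, \infty)$ for $t > 0$ and the left-hand side $\int_0^\infty (g_\theta (t) t)^{- 1 / p} \, dt$ is a fixed positive, possibly infinite, number independent of $r$, whereas $\int_r^\infty (\xi^{- k} q_\sigma (\xi))^{1 / (p - 1)} \, d\xi \to 0$ as $r \to \infty$ because that integral converges; thus the claimed estimate holds for all large $r$ already with $C = 1$. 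Taking $C = \min \{ 1, C' \gamma^{1 / p} \}$ handles both cases and gives a constant depending only on $n$, $p$, $\theta$, $\sigma$, $C_1$, $C_2$, $k$, $\alpha$.

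I expect the only genuine point to be this dichotomy: checking that Theorem~\ref{T2.4} is in fact applicable, and disposing of the complementary case through the triviality statement of Theorem~\ref{T2.1} together with the elementary observation that the right-hand side tends to zero. Everything else is the same routine transfer of the lower bound $\xi f_\sigma (\xi) \ge \gamma \xi^{- k} q_\sigma (\xi)$ across the exponent $(p - 1) / p$ that was already carried out in Corollaries~\ref{C2.7} and~\ref{C2.8}.
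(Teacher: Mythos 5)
Your argument is correct, and its core mechanism --- the pointwise comparison $f_\sigma (r) \ge \gamma r^{- k - 1} q_\sigma (r)$ for large $r$, derived from \eqref{2.1} with $k > -1$, followed by an appeal to Theorem~\ref{T2.4} --- is exactly what the paper intends (its proof just says to repeat the proof of Corollary~\ref{C2.5} with Theorem~\ref{T2.1} replaced by Theorem~\ref{T2.4}). Where you go beyond the paper's one-line proof is the dichotomy on whether $\int_{r_0}^\infty (r f_\sigma (r))^{1 / (p - 1)} \, dr$ converges, and this is a legitimate observation: since the comparison bounds $f_\sigma$ only from below, the corollary's hypothesis does not imply the finiteness hypothesis of Theorem~\ref{T2.4}, so that theorem cannot be invoked verbatim. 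Your repair is sound: in the divergent case Theorem~\ref{T2.1} forces $u = 0$, the left-hand side is then a fixed positive (possibly infinite) quantity, and the right-hand side tends to zero by the assumed convergence of $\int_{r_0}^\infty (r^{-k} q_\sigma (r))^{1 / (p - 1)} \, dr$. An equivalent repair, closer to the internals of the paper's proof of Theorem~\ref{T2.4}, is to note that the finiteness hypothesis is used there only to handle bounded $M (\cdot; u)$ via ``the right-hand side tends to zero,'' a role played in Corollary~\ref{C2.9} directly by the assumed convergence above, while the unbounded case goes through Lemma~\ref{L3.5} with no finiteness assumption at all. Either way, the conclusion and the stated dependence of the constant on $n$, $p$, $\theta$, $\sigma$, $C_1$, $C_2$, $k$, $\alpha$ come out as claimed.
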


\begin{proof}
We repeat the arguments given in the proof of Corollary~\ref{C2.5}
with Theorem~\ref{T2.1} replaced by Theorem~\ref{T2.4}.
\end{proof}

\begin{Example}\label{E2.8}
Assume that $u \ge 0$ is a nontrivial solution of inequality~\eqref{E2.1.1}, 
where $\lambda > p - 1$,
$b \in L_{\infty, loc} ({\mathbb R^n})$,
$q \in L_{\infty, loc} ({\mathbb R^n})$ 
is a non-negative function and, moreover, 
condition~\eqref{2.1} is fulfilled for some $k > -1$.

If~\eqref{E2.1.2} holds with $l < k - p + 1$,
then in accordance with Corollary~\ref{C2.9} we obtain
$$
	M (r; u)
	\le
	C
	r^{
		(l - k + p - 1) / (p - 1 - \lambda)
	}
$$
for all enough large $r$,
where the constant $C > 0$ does not depend of $r$ and $u$.

Now, let condition~\eqref{E2.2.1} be valid instead of~\eqref{E2.1.2},
where $\mu < 1 - p$. Then Corollary~\ref{C2.9} allows us to assert that
$$
	M (r; u)
	\le
	C
	\log^{
		(\mu + p - 1) / (p - 1 - \lambda)
	}
	r
$$
for all enough large $r$,
where the constant $C > 0$ does not depend of $r$ and $u$.
\end{Example}

One can easily verify that the estimates given in Examples~\ref{E2.5}--\ref{E2.8} are exact.

It does not matter for us that the right-hand side of~\eqref{2.1} is a power function.
Theorems~\ref{T2.1}--\ref{T2.4} remain precise for a wide class of functions $b$.

Assume that
\begin{equation}
	|b (x)|
	\le
	\beta
	|x|^k
	\log^m |x|
	\label{2.2}
\end{equation}
for almost all $x \in \Omega$,
where $\beta$, $k$, and $m$ are some constants 
and, moreover, either $k > -1$ or $k = -1$ and $m > 0$.

\begin{Corollary}\label{C2.6}
Let~\eqref{2.2} hold.
If there are real numbers $\theta > 1$ and $\sigma > 1$ such that
condition~\eqref{T2.1.1} is valid and
$$
	\int_{r_0}^\infty
	(
		r^{-k}
		\log^{-m} r
		\,
		q_\sigma (r)
	)^{1 / (p - 1)}
	\,
	dr
	=
	\infty
$$
then any solution of~\eqref{1.1}, \eqref{1.2} is trivial.
\end{Corollary}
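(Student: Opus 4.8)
The plan is to deduce Corollary~\ref{C2.6} from Theorem~\ref{T2.1}. Since condition~\eqref{T2.1.1} is assumed outright, it suffices to show that the divergence hypothesis on $\int_{r_0}^\infty (r^{-k}\log^{-m}r\,q_\sigma(r))^{1/(p-1)}\,dr$, together with~\eqref{2.2} and the structural assumption on $k$ and $m$, forces~\eqref{T2.1.2}; Theorem~\ref{T2.1} then yields $u=0$ almost everywhere in $\Omega$. Thus the whole argument reduces to producing a pointwise-in-$r$ lower bound for $f_\sigma$.

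First I would bound $\esssup_{\Omega_{r/\sigma,\sigma r}}|b|$ from above. On the annulus $\Omega_{r/\sigma,\sigma r}$ one has $r/\sigma<|x|<\sigma r$, so $|x|^k\le\sigma^{|k|}r^k$ whatever the sign of $k$; and, since $\log|x|/\log r\to 1$ uniformly there as $r\to\infty$, a short case distinction (for $m\ge 0$ via $(\log r+\log\sigma)^m\le 2^m\log^m r$ once $r\ge\sigma$, and for $m<0$ via $(\log r-\log\sigma)^m\le 2^{|m|}\log^m r$ once $r\ge\sigma^2$) shows $\log^m|x|\le 2^{|m|}\log^m r$ for all sufficiently large $r$, irrespective of the sign of $m$. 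Combining with~\eqref{2.2}, one gets $\esssup_{\Omega_{r/\sigma,\sigma r}}|b|\le c_1 r^k\log^m r$ for $r$ large, with $c_1$ depending only on $\beta$, $k$, $m$, $\sigma$.

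Next I would invoke the hypothesis ``$k>-1$, or $k=-1$ and $m>0$''. In the first case the power $r^{k+1}$ dominates any logarithmic factor, and in the second case $r^{k+1}\log^m r=\log^m r$ with $m>0$; in both cases $r^{k+1}\log^m r\to\infty$, so $1+r\,\esssup_{\Omega_{r/\sigma,\sigma r}}|b|\le 1+c_1 r^{k+1}\log^m r\le c_2 r^{k+1}\log^m r$ for $r$ large. Recalling $q_\sigma(r)=\essinf_{\Omega_{r/\sigma,\sigma r}}q$, this gives
$$
	f_\sigma(r)=\frac{q_\sigma(r)}{1+r\,\esssup_{\Omega_{r/\sigma,\sigma r}}|b|}\ge\frac{q_\sigma(r)}{c_2 r^{k+1}\log^m r}
$$
for all sufficiently large $r$, hence $r f_\sigma(r)\ge c_2^{-1}\,r^{-k}\log^{-m}r\,q_\sigma(r)$. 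Raising to the power $1/(p-1)$ and integrating over a tail $[R_0,\infty)\subset[r_0,\infty)$ on which this bound holds, and using that truncating the lower limit does not affect divergence of a nonnegative integrand, we obtain $\int_{r_0}^\infty (r f_\sigma(r))^{1/(p-1)}\,dr\ge c_2^{-1/(p-1)}\int_{R_0}^\infty (r^{-k}\log^{-m}r\,q_\sigma(r))^{1/(p-1)}\,dr=\infty$, which is exactly~\eqref{T2.1.2}. An application of Theorem~\ref{T2.1} then completes the proof.

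I expect the only genuine work to lie in the case analysis of the second step: one must verify that on the annulus $\{r/\sigma<|x|<\sigma r\}$ the weight $|x|^k\log^m|x|$ is comparable to $r^k\log^m r$ with constants independent of $r$ (for $r$ large), checking all four sign combinations of $k$ and $m$, and then observe that $1+r\,\esssup|b|$ is comparable to $r^{k+1}\log^m r$ \emph{precisely} when $k>-1$ or ($k=-1$ and $m>0$) --- which is exactly the standing hypothesis on $k$ and $m$. Everything else is routine bookkeeping plus a direct citation of Theorem~\ref{T2.1}; one only has to keep in mind that the $\log r$-estimates are asserted for $r$ large, which is harmless because Theorem~\ref{T2.1} uses only the divergence of the integral in~\eqref{T2.1.2}.
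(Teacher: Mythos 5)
Your proposal is correct and follows exactly the paper's own route: establish the pointwise bound $f_\sigma(r)\ge\gamma\,r^{-k-1}\log^{-m}r\,q_\sigma(r)$ for large $r$ (using~\eqref{2.2} and the hypothesis that $k>-1$, or $k=-1$ and $m>0$, to absorb the constant $1$ in the denominator of $f_\sigma$), deduce~\eqref{T2.1.2}, and apply Theorem~\ref{T2.1}. The extra case analysis you supply for the annulus estimates is just the routine verification the paper leaves implicit.
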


\begin{proof}
From~\eqref{2.2}, we have
$f_{\sigma} (r) \ge \gamma r ^{- k - 1} \log^{- m} r \, q_\sigma (r)$
for all sufficiently large $r$,
where the constant $\gamma > 0$ depends only on $\beta$, $k$, $m$, and $\sigma$.
The proof is completed by applying of Theorem~\ref{T2.1}.
\end{proof}

\begin{Example}\label{E2.4}
In~\eqref{E2.1.1}, let
$b \in L_{\infty, loc} ({\mathbb R^n})$ 
satisfy condition~\eqref{2.2} and
$q \in L_{\infty, loc} ({\mathbb R^n})$
be a non-negative function such that
\begin{equation}
	q (x) \sim |x|^{k - p + 1}
	\quad
	\mbox{as } x \to \infty,
	\label{E2.4.1}
\end{equation}
i.e., in formula~\eqref{E2.1.2}, we take the critical exponent $l = k - p + 1$.

According to Corollary~\ref{C2.6}, if
\begin{equation}
	\lambda > p - 1
	\quad
	\mbox{and}
	\quad
	m \le p - 1,
	\label{E2.4.2}
\end{equation}
then any non-negative solution of inequality~\eqref{E2.1.1} is trivial.
As mentioned above, the first condition in~\eqref{E2.4.2} is exact.
Now, we show that the second condition is also exact.
Really, let
$$
	\lambda > p - 1
	\quad
	\mbox{and}
	\quad
	m > p - 1.
$$
By direct calculation, one can verify that
$$
	u (x)
	=
	(\log \max \{ |x|, r_0 \})^{
		(m - p + 1) / (\lambda - p + 1)
	}
$$
is a positive solution of~\eqref{E2.1.1} for enough large $r_0$, 
where
$q \in L_{\infty, loc} ({\mathbb R^n})$ 
is a non-negative function such that~\eqref{E2.4.1} holds 
and
$b \in L_{\infty, loc} ({\mathbb R^n})$
is a non-negative function satisfying the relation
$$
	b (x)
	\sim
	|x|^k
	\log^m |x|
	\quad
	\mbox{as } x \to \infty.
$$
\end{Example}

\section{Proof of Theorems~\ref{T2.1}--\ref{T2.4}}\label{Proof}

We need a well-known result concerning inequalities of the form
\begin{equation}
	\diver A (x, D u)
	\ge
	F (x, u, D u)
	\quad
	\mbox{in }
	\Omega_{R_0, R_1},
	\label{3.1}
\end{equation}
where 
$0 < R_0 < R_1 \le \infty$,
the function
$A : \Omega \times {\mathbb R}^n \to {\mathbb R}^n$
is the same as in~\eqref{1.1} and
$F : \Omega_{R_0, R_1} \times [0, \infty) \times {\mathbb R}^n \to {\mathbb R}$
satisfies the following conditions:
there exist a real number $\sigma > 1$ and locally bounded measurable functions 
$F_0 : [R_0, R_1) \times (0, \infty) \to [0,\infty)$
and
$F_1 : [R_0, R_1) \to [0, \infty)$
such that
$$
	F_0 (r, t-0) = F_0 (r, t)
	\quad
	\mbox{for all } R_0 < r < R_1, \:  t > 0,
$$
$$
	F_0 (r, t_1) \ge F_0 (r, t_2)
	\quad
	\mbox{for all } R_0 < r < R_1, \: t_1 \ge t_2 > 0
$$
and
\begin{equation}
	F (x, t, \zeta)
	\ge
	\sup_{
		r
		\in
		(|x| / \sigma, \sigma |x|)
		\cap
		(R_0, R_1)
	}
	F_0 (r, t)
	-
	|\zeta|^{p - 1}
	\inf_{
		r
		\in
		(|x| / \sigma, \sigma |x|)
		\cap
		(R_0, R_1)
	}
	F_1 (r)
	\label{3.3}
\end{equation}
for almost all
$x \in \Omega_{R_0, R_1}$
and for all
$t \in (0, \infty)$
and
$\zeta \in {\mathbb R}^n$.

We say that $u$ is a solution of~\eqref{3.1} if
$
	u
	\in
	W_p^1 (
		\Omega_{R_0, r}
	)
	\cap
	L_\infty (
		\Omega_{R_0, r}
	),
$
$
	A (x, D u)
	\in
	L_{p/(p-1)} (
		\Omega_{R_0, r}
	),
$
and
$
	F (x, u, D u)
	\in
	L_{p/(p-1)} (
		\Omega_{R_0, r}
	)
$
for any real number $r \in (R_0, R_1)$ and, moreover,
$$
	- \int_{
		\Omega_{R_0, R_1}
	}
	A (x, D u)
	D \varphi
	\, dx
	\ge
	\int_{
		\Omega_{R_0, R_1}
	}
	F (x, u, D u)
	\varphi
	\, dx
$$
for any non-negative function
$
	\varphi 
	\in 
	C_0^\infty (
		\Omega_{R_0, R_1}
	).
$

According to this definition, any solution of~\eqref{3.1} must be a non-negative function;
otherwise the right-hand side of the last inequality is not well-defined.

Let us denote
$
	\Gamma_{R_0, R_1}
	=
	B_{R_0, R_1}
	\cap
	\partial \Omega,
$
where
$
	B_{R_0, R_1}
	=
	\{
		x \in {\mathbb R}^n : R_0 < |x| < R_1
	\}.
$
The condition
\begin{equation}
	\left.
		u
	\right|_{
		\Gamma_{R_0, R_1}
	}
	=
	0
	\label{3.2}
\end{equation}
means that
$
	\varphi u
	\in
	{
		\stackrel{\rm \scriptscriptstyle o}{W}\!\!{}_p^1
		(
			\Omega_{R_0, R_1}
		)
	}
$
for any
$
	\varphi
	\in
	C_0^\infty 
	(
		B_{R_0, R_1}
	).
$

We shall assume that $S_r \cap \Omega \ne \emptyset$ for all $r \in (R_0, R_1)$.

\begin{Lemma}\label{L3.1}
Suppose that $0 < \beta < 1$ and $R_0 < \rho_0 < \rho_1 < R_1$ are some real numbers.
Also let $u$ be a solution of~\eqref{3.1}, \eqref{3.2} such that
${M (\cdot; u)}$ is a non-decreasing function on 
${(R_0, R_1)}$ satisfying the relation
$
	{M (R_0 + 0; u)} > 0.
$
If
$\beta^{1/2} {M (\rho_1; u)} \le {M (\rho_0; u)}$
and
$\sigma^2 \rho_0 \ge \rho_1$,
then
$$
	M (\rho_1; u) - M (\rho_0; u)
	\ge
	\gamma
	\min
	\left\{
		(\rho_1 - \rho_0)^{p / (p - 1)},
		\frac{
			\rho_1 - \rho_0
		}{
			\lambda^{1 / (p - 1)}
		}
	\right\}
	F_0^{1 / (p-1)} (s, \beta M (\rho_1; u))
$$
for all $s \in [\rho_1 / \sigma, \sigma \rho_0] \cap (R_0, R_1)$,
where
$$
	\lambda
	=
	\inf_{
		[\rho_1 / \sigma, \sigma \rho_0]
		\cap
		(R_0, R_1)
	}
	F_1
$$
and the constant $\gamma > 0$ depends only on
$n$, $p$, $C_1$, $C_2$, and $\beta$.
\end{Lemma}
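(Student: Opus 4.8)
The plan is to reduce \eqref{3.1}, with a fixed admissible choice of the radius $s$, to a homogeneous inequality on the annulus $\Omega_{\rho_0,\rho_1}$, and then to read off the growth of $M(\cdot;u)$ from a Caccioppoli-type estimate obtained by testing against $\varphi=\eta^p(|x|)\,(u-M(\rho_0;u))_+$ with a radial cut-off $\eta$.

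\emph{Localisation.} Put $m_0=M(\rho_0;u)$, $m_1=M(\rho_1;u)$, $\delta=\rho_1-\rho_0$ and fix $s\in[\rho_1/\sigma,\sigma\rho_0]\cap(R_0,R_1)$; this set is non-empty because $\sigma^2\rho_0\ge\rho_1$. For $x\in\Omega_{\rho_0,\rho_1}$ we have $\rho_0<|x|<\rho_1\le\sigma^2\rho_0$, so $[\rho_1/\sigma,\sigma\rho_0]\subseteq(|x|/\sigma,\sigma|x|)$; hence $s\in(|x|/\sigma,\sigma|x|)\cap(R_0,R_1)$ and the infimum of $F_1$ over that larger set is $\le\lambda$, so \eqref{3.3} yields $F(x,u,Du)\ge F_0(s,u)-\lambda|Du|^{p-1}$ a.e.\ in $\Omega_{\rho_0,\rho_1}$. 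Since $M(\cdot;u)$ is non-decreasing, $u\le M(r;u)\le m_1$ on $S_r\cap\Omega$ in the trace sense for a.e.\ $r$ with $R_0<r<\rho_1$, whence $0\le u\le m_1$ a.e.\ in $\Omega_{\rho_0,\rho_1}$; moreover $m_0\ge\beta^{1/2}m_1>\beta m_1$, the last inequality because $m_1=M(\rho_1;u)\ge M(R_0+0;u)>0$.

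\emph{Energy inequality.} Using $u|_{\Gamma_{R_0,R_1}}=0$ and $R_0<\rho_0<\rho_1<R_1$, a standard density argument makes the weak form of \eqref{3.1} available for all non-negative $\varphi\in{\stackrel{\rm \scriptscriptstyle o}{W}\!\!{}_p^1(\Omega_{R_0,R_1})}\cap L_\infty$ supported away from $S_{R_0}$ and $S_{R_1}$. I would take $\varphi=\eta^p\psi(u)$ with $\psi(t)=(t-m_0)_+$ and $\eta=\eta(|x|)$, $0\le\eta\le1$, equal to $1$ on $[\rho_0,\rho_1]$, with $|D\eta|\le c/\delta$ and support close to $[\rho_0,\rho_1]$ (the room between $R_0$ and $\rho_0$ and between $\rho_1$ and $R_1$ makes this possible, after passing if necessary to slightly smaller radii when $R_1<\infty$ is close to $\rho_1$). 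Then $0\le\psi(u)\le m_1-m_0$, and $F_0(s,u)\ge F_0(s,\beta m_1)$ on $\{\psi(u)>0\}=\{u>m_0\}$ because $F_0(s,\cdot)$ is non-decreasing. Plugging $\varphi$ in and using $\zeta\cdot A(x,\zeta)\ge C_1|\zeta|^p$, $|A(x,\zeta)|\le C_2|\zeta|^{p-1}$, $\psi'\ge0$ gives
$$
	F_0(s,\beta m_1)\!\int\!\eta^p\psi(u)\,dx\ +\ C_1\!\int\!\eta^p\psi'(u)\,|Du|^p\,dx
	\ \le\ c_1\!\int\!\eta^{p-1}|D\eta|\,\psi(u)\,|Du|^{p-1}\,dx\ +\ \lambda\!\int\!\eta^p\psi(u)\,|Du|^{p-1}\,dx .
$$
When $\lambda(\rho_1-\rho_0)^{1/(p-1)}\le1$, Young's inequality (with $\psi(u)\le m_1-m_0$ and $|D\eta|\le c/\delta$) absorbs both right-hand terms into a fixed fraction of $\int\eta^p\psi'(u)|Du|^p$ at a cost $C(m_1-m_0)^p\delta^{-p}|\Omega_{\rho_0,\rho_1}|$, so that
$$
	F_0(s,\beta m_1)\!\int\!\eta^p\psi(u)\,dx\ +\ c\!\int\!\eta^p\psi'(u)\,|Du|^p\,dx\ \le\ C\,(m_1-m_0)^p\,\delta^{-p}\,\bigl|\Omega_{\rho_0,\rho_1}\bigr| .
$$
When $\lambda(\rho_1-\rho_0)^{1/(p-1)}\ge1$ one first removes the first-order term by the exponential substitution $u\mapsto w=e^{c\lambda u}$ (with $c$ suitably chosen): by the structure conditions $w$ satisfies, without gradient term, an inequality of the same type with a non-decreasing source still controlled below by $F_0(s,\beta m_1)$, and running the homogeneous argument for $w$ and unwinding replaces the factor $(\rho_1-\rho_0)^{p/(p-1)}$ by $(\rho_1-\rho_0)\lambda^{-1/(p-1)}$.

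\emph{Coercivity, the main obstacle.} It remains to bound the left-hand side of the last displayed inequality from below, and this I expect to be the crux. The function $v=(u-m_0)_+$ vanishes on $S_{\rho_0}$ in the trace sense, is bounded by $m_1-m_0$, and has $\esssup_{S_{\rho_1}\cap\Omega}v=m_1-m_0$; integrating $|\partial_r v|$ along radial segments joining $S_{\rho_0}$ to $S_{\rho_1}$ and applying Hölder shows that $\int\eta^p\psi'(u)|Du|^p\,dx=\int\eta^p|Dv|^p\,dx$ exceeds, up to a dimensional constant, $(m_1-m_0)^p\delta^{1-p}$ times the $(n-1)$-measure of the set of directions along which $v$ nearly attains its maximum, and the hypothesis $\beta^{1/2}m_1\le m_0$ keeps the levels $\beta m_1<m_0\le m_1$ mutually comparable, so that the condenser involved does not degenerate. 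The point requiring care is that this set of directions, equivalently the super-level set $\{u>m_0+\tau\}$, may a priori be thin; but then $u$ is nearly constant at a value exceeding $\beta m_1$ on a set of substantial measure, and, $F_0(s,\cdot)$ being non-decreasing, the inequality $\diver A(x,Du)\ge F_0(s,u)-\lambda|Du|^{p-1}$ does not permit $F_0(s,\beta m_1)$ to be large there --- so the term $F_0(s,\beta m_1)\!\int\eta^p\psi(u)$ cannot be large either. Playing these two mechanisms against each other (equivalently, optimising the truncation level of $\psi$ and the profile of $\eta$) yields $F_0(s,\beta m_1)\le C(m_1-m_0)^{p-1}(\delta^{-p}+\lambda\delta^{1-p})$; taking the $(p-1)$-th root, and observing that $\min\{\delta^{p/(p-1)},\delta\lambda^{-1/(p-1)}\}$ records exactly the two cases $\lambda\delta\le1$ and $\lambda\delta\ge1$, gives the assertion, with $\gamma$ depending only on $n$, $p$, $C_1$, $C_2$, $\beta$. (If $m_1=m_0$ there is nothing to prove: then $u\le m_1$ with $\esssup_{S_{\rho_1}\cap\Omega}u=m_1$, and the structure inequality forces $F_0(s,m_1)$, hence $F_0(s,\beta m_1)$, to vanish.)
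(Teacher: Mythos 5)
The paper does not actually prove Lemma~\ref{L3.1}: it is quoted verbatim from \cite[Lemma~3.1]{Me}, so there is no internal argument to compare yours with, and your attempt must stand on its own. Your localisation step is correct (for $x\in\Omega_{\rho_0,\rho_1}$ one has $[\rho_1/\sigma,\sigma\rho_0]\subseteq(|x|/\sigma,\sigma|x|)$ precisely because $\sigma^2\rho_0\ge\rho_1$, and the monotonicity of $F_0(s,\cdot)$ is used correctly), and the Caccioppoli inequality obtained from $\varphi=\eta^p(u-m_0)_+$ is standard. But there is a genuine gap exactly where you say you expect the crux to be: the passage from that integral inequality to a statement about $M(\rho_1;u)-M(\rho_0;u)$, which is an essential supremum over a sphere. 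Your dichotomy is not a dichotomy. If the set of radial directions along which $(u-m_0)_+$ nearly attains $m_1-m_0$ is thin, it does \emph{not} follow that $u$ is nearly constant at a value exceeding $\beta m_1$ on a set of substantial measure: $u$ may exceed $m_0$ only on a thin spike reaching $S_{\rho_1}$, in which case both $\int\eta^p\psi(u)\,dx$ and $\int\eta^p|Dv|^p\,dx$ are small while $F_0(s,\beta m_1)$ is large, and your displayed energy inequality yields nothing. Closing this requires the machinery of \cite{Me} (a comparison/capacity argument, or a Moser-type iteration combined with a weak Harnack inequality, converting measure-theoretic information into control of the spherical supremum); none of that is present, and the sentence ``playing these two mechanisms against each other yields $F_0(s,\beta m_1)\le C(m_1-m_0)^{p-1}(\delta^{-p}+\lambda\delta^{1-p})$'' is an assertion of the lemma, not a proof of it.

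Two further points. The treatment of the gradient term is also only asserted: for a general $A$ satisfying the stated structure conditions, the substitution $w=e^{c\lambda u}$ produces a transformed operator whose ellipticity constants involve $e^{c\lambda(m_1-m_0)}$, so it is not clear that the resulting $\gamma$ depends only on $n$, $p$, $C_1$, $C_2$, $\beta$; moreover your case split ``$\lambda(\rho_1-\rho_0)^{1/(p-1)}\le 1$'' is inconsistent with the split $\lambda\delta\le 1$ that the minimum in the conclusion actually encodes (and which you state correctly at the end). Finally, the degenerate case $m_1=m_0$ is not ``nothing to prove'': you must show $F_0(s,\beta m_1)=0$, which is a quantitative strong-maximum-principle statement that is again waved at rather than established.
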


The proof is given in paper~\cite[Lemma~3.1]{Me}.

\begin{Remark}\label{R3.1}
If $u$ is a solution of~\eqref{1.1}, \eqref{1.2}, 
then
$$
	M (r; u)
	=
	\esssup_{
		B_r
		\cap
		\Omega
	}
	u,
	\quad
	r \in (R_0, R_1),
$$
in accordance with the maximum principle.
Hence, ${M (\cdot; u)}$ is a non-decreasing function on ${(R_0, R_1)}$.
In addition, we have
${M (r; u)} = {M (r - 0; u)}$
for all 
$r \in {(R_0, R_1)}$.
\end{Remark}

\begin{Lemma}\label{L3.2}
Let $0 < R_0 < R_1 < \infty$ and, moreover,
$u$ be a solution of~\eqref{1.1}, \eqref{1.2} such that
$
	{M (R_0 + 0; u)} > 0
$
and
$\beta^{1/2} {M (R_1 - 0; u)} \le {M (R_0 + 0; u)}$
for some real number $0 < \beta < 1$.
Then
\begin{equation}
	M (R_1 - 0; u) - M (R_0 + 0; u)
	\ge
	\gamma
	\min
	\left\{
		(R_1 - R_0)^{p / (p - 1)},
		\frac{
			R_1 - R_0
		}{
			\mu^{1 / (p - 1)}
		}
	\right\}
	Q
	G,
	\label{L3.2.1}
\end{equation}
where $\gamma > 0$ is the constant of Lemma~$\ref{3.1}$,
$$
	Q
	=
	\essinf_{
		\Omega_{R_0, R_1}
	}
	q^{1 / (p-1)},
$$
$$
	G
	=
	\min_{
		[\beta^2 M (R_1 - 0; u), M (R_1 - 0; u)]
	}
	g^{1 / (p-1)},
$$
and
$$
	\mu
	=
	\esssup_{
		\Omega_{R_0, R_1}
	}
	|b|.
$$
\end{Lemma}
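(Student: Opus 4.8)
The plan is to recast inequality~\eqref{1.1} as an inequality of type~\eqref{3.1} on the fixed annulus $\Omega_{R_0,R_1}$, to apply Lemma~\ref{L3.1} on the slightly smaller annuli $(R_0+\delta,R_1-\delta)$, and to let $\delta\to 0+$. Write $m_0=M(R_0+0;u)$ and $m_1=M(R_1-0;u)$. By Remark~\ref{R3.1} the function $M(\cdot;u)$ is non-decreasing on $(R_0,R_1)$ and coincides there with $\esssup_{B_r\cap\Omega}u$; hence $0<m_0\le m_1<\infty$, $\beta^{1/2}m_1\le m_0$ by hypothesis, and $u\le m_1$ almost everywhere in $\Omega_{R_0,R_1}$. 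I then introduce $\bar g(t)=g(\min\{t,m_1\})$ and $g_*(t)=\inf_{\tau\ge t}\bar g(\tau)$; since $g$ is continuous, $g_*$ is non-negative, non-decreasing and left-continuous, and $g_*(t)=\min_{[t,m_1]}g$ for $0<t<m_1$, because on $[m_1,\infty)$ the function $\bar g$ is constant equal to $g(m_1)\ge\min_{[t,m_1]}g$. Putting $Q=\essinf_{\Omega_{R_0,R_1}}q^{1/(p-1)}$, $\mu=\esssup_{\Omega_{R_0,R_1}}|b|$, $F_0(r,t)=Q^{p-1}g_*(t)$, $F_1(r)=\mu$ and $F(x,t,\zeta)=q(x)\bar g(t)-\mu|\zeta|^{p-1}$, one checks that $F_0$, $F_1$ meet the monotonicity and left-continuity requirements preceding~\eqref{3.1}, that~\eqref{3.3} holds for every $\sigma>1$ (because $q(x)\bar g(t)\ge Q^{p-1}g_*(t)$ and $|b(x)|\le\mu$ for a.e.\ $x\in\Omega_{R_0,R_1}$), and, using $\bar g(u)=g(u)$ a.e.\ in $\Omega_{R_0,R_1}$ together with $qg(u)-b|Du|^{p-1}\ge F(x,u,Du)$, that $u$ is a solution of~\eqref{3.1}, \eqref{3.2} on $\Omega_{R_0,R_1}$ for this $F$; the membership properties are immediate from $u\in W_{p,loc}^1(\Omega)\cap L_{\infty,loc}(\Omega)$, the growth bounds on $A$, and the local boundedness of $q$ and $b$.

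Next, fix $\sigma=(R_1/R_0)^{1/2}$ and, for $\delta\in(0,(R_1-R_0)/2)$, set $\rho_0=R_0+\delta$, $\rho_1=R_1-\delta$. Then $R_0<\rho_0<\rho_1<R_1$, $\sigma^2\rho_0>\sigma^2R_0=R_1>\rho_1$, the point $(R_0R_1)^{1/2}$ lies in $[\rho_1/\sigma,\sigma\rho_0]\cap(R_0,R_1)$, $M(R_0+0;u)=m_0>0$, and $\beta^{1/2}M(\rho_1;u)\le\beta^{1/2}m_1\le m_0\le M(\rho_0;u)$, so all hypotheses of Lemma~\ref{L3.1} hold. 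Since $F_0$, $F_1$ are constant in $r$, Lemma~\ref{L3.1} yields
\[
	M(\rho_1;u)-M(\rho_0;u)
	\ge
	\gamma\min\left\{(\rho_1-\rho_0)^{p/(p-1)},\ \frac{\rho_1-\rho_0}{\mu^{1/(p-1)}}\right\}
	Q\bigl(g_*(\beta M(\rho_1;u))\bigr)^{1/(p-1)}.
\]
Because $M(\rho_1;u)\in[m_0,m_1]\subseteq[\beta^{1/2}m_1,m_1]$, we get $\beta M(\rho_1;u)\in[\beta^{3/2}m_1,\beta m_1]\subseteq[\beta^2m_1,m_1)$, whence $g_*(\beta M(\rho_1;u))=\min_{[\beta M(\rho_1;u),m_1]}g\ge\min_{[\beta^2m_1,m_1]}g=G^{p-1}$. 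Letting $\delta\to 0+$, so that $M(\rho_1;u)\to m_1$, $M(\rho_0;u)\to m_0$ and $\rho_1-\rho_0\to R_1-R_0$, gives~\eqref{L3.2.1}.

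The main obstacle is the construction of $F_0$: Lemma~\ref{L3.1} forces $F_0(r,\cdot)$ to be non-decreasing, whereas $g$ is not assumed monotone and the estimate must retain $\min_{[\beta^2m_1,m_1]}g$. The truncation $\bar g(t)=g(\min\{t,m_1\})$ is legitimate precisely because the maximum principle (Remark~\ref{R3.1}) gives $u\le m_1$ a.e.\ in $\Omega_{R_0,R_1}$, so modifying $g$ above $m_1$ does not affect the solution property; passing to the non-decreasing left-continuous minorant $g_*$ then restores the structure~\eqref{3.3}, and the inclusion $[\beta M(\rho_1;u),m_1]\subseteq[\beta^2m_1,m_1]$, valid thanks to $\beta^{1/2}m_1\le m_0\le M(\rho_1;u)\le m_1$, is exactly what makes $g_*$ recover $G^{p-1}$.
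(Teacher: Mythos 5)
Your proof is correct and follows essentially the same route as the paper: both recast \eqref{1.1} as an inequality of type~\eqref{3.1} on $\Omega_{R_0,R_1}$ with $F_1\equiv\mu$ and a non-decreasing, left-continuous minorant for $F_0$, apply Lemma~\ref{L3.1} on interior radii, and pass to the limit. The only cosmetic difference is your choice of minorant: the paper takes the step function $F_0(r,t)=\chi_{(\beta^2 M(R_1-0;u),\infty)}(t)\,Q^{p-1}G^{p-1}$, whereas you take the running infimum $g_*$ of the truncated $g$; both evaluate to at least $Q^{p-1}G^{p-1}$ at $\beta M(\rho_1;u)$, so the conclusions coincide.
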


\begin{proof}
As mentioned in Remark~\ref{R3.1}, 
${M (\cdot; u)}$ is a non-decreasing function on the interval ${(R_0, R_1)}$. 
Also it can be seen that
$$
	M (R_1 - 0; u)
	=
	\esssup_{
		B_{R_1}
		\cap
		\Omega
	}
	u.
$$
In particular, $u (x) \le M (R_1 - 0; u)$ for almost all $x \in B_{R_1} \cap \Omega$.

Let us denote
$
	I
	=
	(\beta^2 M (R_1 - 0; u), \infty).
$
The function $u$ is a solution of~\eqref{3.1}, \eqref{3.2}, where
$$
	F (x, t, \zeta)
	=
	\chi_I (t)
	Q^{p - 1}
	G^{p - 1}
	-
	\mu
	|\zeta|^{p - 1}.
$$
Here, $\chi_I$ is the characteristic function of the set $I$, i.e.
$$
	\chi_I (t)
	=
	\left\{
		\begin{array}{ll}
			1,
			&
			t
			\in
			I,
			\\
			0,
			&
			t
			\not\in
			I.
		\end{array}
	\right.
$$
Putting further
$
	F_0 (r, t)
	=
	\chi_I (t)
	Q^{p - 1}
	G^{p - 1}
$
and
$
	F_1 (r) = \mu,
$
one can verify that~\eqref{3.3} is fulfilled for any $\sigma > 1$.
Thus, applying Lemma~\ref{L3.1}, we obtain
$$
	M (\rho_1; u) - M (\rho_0; u)
	\ge
	\gamma
	\min
	\left\{
		(\rho_1 - \rho_0)^{p / (p - 1)},
		\frac{
			\rho_1 - \rho_0
		}{
			\mu^{1 / (p - 1)}
		}
	\right\}
	Q
	G
$$
for all $R_0 < \rho_0 < \rho_1 < R_1$, whence~\eqref{L3.2.1} follows at once.

The proof is completed.
\end{proof}

\begin{Lemma}\label{L3.3}
Let $u$ be a solution of~\eqref{1.1}, \eqref{1.2}.
If $r_0 \le r_1 < r_2$, $\eta > 1$, and $\tau > 1$ are real numbers 
such that $\eta M (r_1 + 0; u) \le M (r_2; u)$, $r_2 \le \tau r_1$,
and ${M (r_1 + 0; u)} > 0$,
then at least one of the following two inequalities is valid:
\begin{equation}
	\int_{
		M (r_1 + 0; u)
	}^{
		M (r_2; u)
	}
	(g_\eta (t) t)^{-1 / p}
	\,
	dt
	\ge
	C
	\int_{r_1}^{r_2}
	q_\tau^{1 / p}
	(\xi)
	\,
	d\xi,
	\label{L3.3.1}
\end{equation}
\begin{equation}
	\int_{
		M (r_1 + 0; u)
	}^{
		M (r_2; u)
	}
	g_\eta^{-1 / (p - 1)}
	(t)
	\,
	dt
	\ge
	C
	\int_{r_1}^{r_2}
	(\xi f_\tau (\xi))^{1 / (p - 1)}
	\,
	d\xi,
	\label{L3.3.2}
\end{equation}
where the constant $C > 0$ depends only on $n$, $p$, $\eta$, $C_1$, and $C_2$.
\end{Lemma}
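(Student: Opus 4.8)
The plan is to reduce Lemma~\ref{L3.3} to Lemma~\ref{L3.2} by a dyadic-type decomposition of the interval $[r_1, r_2]$ that is adapted to the growth of $M(\cdot; u)$. First I would fix the solution $u$ and work on the interval $(r_1, r_2)$, where by Remark~\ref{R3.1} the function $M(\cdot; u)$ is non-decreasing and left-continuous. Using the hypothesis $\eta M(r_1+0; u) \le M(r_2; u)$, I would choose a partition $r_1 = \rho_0 < \rho_1 < \cdots < \rho_N = r_2$ such that on each subinterval $M$ grows by a controlled geometric factor, i.e. $\beta^{1/2} M(\rho_{j+1}-0; u) \le M(\rho_j + 0; u)$ with $\beta$ depending only on $\eta$ (for instance $\beta = \eta^{-2/\text{something}}$), while ensuring $M(\rho_{j+1} - 0; u) \le \beta^{-1} M(\rho_j + 0; u)$ so that consecutive values are comparable. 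The left-continuity and monotonicity of $M$ make such a partition constructible by repeatedly taking $\rho_{j+1}$ to be the supremum of those $\rho$ with $M(\rho - 0; u) \le \beta^{-1/2} M(\rho_j + 0; u)$.

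Next, on each subinterval $[\rho_j, \rho_{j+1}]$ I would apply Lemma~\ref{L3.2} with $R_0 = \rho_j$, $R_1 = \rho_{j+1}$. This yields
\begin{equation*}
	M(\rho_{j+1}-0; u) - M(\rho_j + 0; u)
	\ge
	\gamma
	\min
	\left\{
		(\rho_{j+1} - \rho_j)^{p/(p-1)},
		\frac{\rho_{j+1} - \rho_j}{\mu_j^{1/(p-1)}}
	\right\}
	Q_j G_j,
\end{equation*}
where $Q_j = \essinf_{\Omega_{\rho_j, \rho_{j+1}}} q^{1/(p-1)}$, $\mu_j = \esssup_{\Omega_{\rho_j, \rho_{j+1}}} |b|$, and $G_j = \min g^{1/(p-1)}$ over $[\beta^2 M(\rho_{j+1}-0;u), M(\rho_{j+1}-0;u)]$. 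Because the partition was chosen so that all of $M(\rho_j+0;u), \dots, M(\rho_{j+1}-0;u)$ lie in an interval of the form $(t/\eta, \eta t)$, the quantity $G_j$ can be bounded below by $g_\eta^{1/(p-1)}$ evaluated at a point in $[M(\rho_j+0;u), M(\rho_{j+1}-0;u)]$, and since $r_2 \le \tau r_1$ implies $\Omega_{\rho_j,\rho_{j+1}} \subset \Omega_{\rho/\tau, \tau\rho}$ for appropriate $\rho$, we get $Q_j \ge q_\tau^{1/(p-1)}(\xi)$ and $1 + \rho_{j+1}\mu_j \le C(1 + \xi \esssup |b|)$ for $\xi \in [\rho_j, \rho_{j+1}]$, so the right-hand side is comparable to a Riemann-sum term for either $q_\tau^{1/p}$ or $(\xi f_\tau(\xi))^{1/(p-1)}$, depending on which term achieves the minimum.

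Then I would split the index set $\{0, \dots, N-1\}$ into those $j$ for which the first term in the $\min\{\cdot,\cdot\}$ dominates (the "$q^{1/p}$ regime") and those for which the second dominates (the "$f_\tau$ regime"); summing the increments $M(\rho_{j+1}-0;u) - M(\rho_j+0;u)$ telescopes on the left to $M(r_2;u) - M(r_1+0;u)$, which after dividing by appropriate powers of $M$-values and recognizing the left sides as Riemann sums for the integrals $\int (g_\eta(t)t)^{-1/p}\,dt$ or $\int g_\eta^{-1/(p-1)}(t)\,dt$ produces a lower bound $\ge C\sum(\text{corresponding right-side terms})$. If the $q^{1/p}$ regime carries at least half the total right-hand mass, we get \eqref{L3.3.1}; otherwise we get \eqref{L3.3.2}. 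The main obstacle I anticipate is the bookkeeping in building the partition so that all the comparability constants are uniform (depending only on $\eta$), and correctly matching the discrete sums to the integrals $\int (g_\eta(t)t)^{-1/p}\,dt$ — this requires care because $(g_\eta(t)t)^{-1/p}$ is what appears when $(\rho_{j+1}-\rho_j)^{p/(p-1)}$ is the smaller term (so $\rho_{j+1}-\rho_j$ is small, forcing many subintervals) versus $g_\eta^{-1/(p-1)}(t)$ which appears in the other regime, and one must verify that in each regime the geometric growth of $M$ together with the Lemma~\ref{L3.2} estimate indeed telescopes to the claimed integral up to a constant.
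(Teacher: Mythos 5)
Your strategy is the paper's: partition $[r_1,r_2]$ so that $M(\cdot;u)$ changes by a fixed factor depending only on $\eta$ on each subinterval (the paper builds the partition downward from $r_2$, you build it upward from $r_1$ --- immaterial), apply Lemma~\ref{L3.2} on each subinterval, split the indices according to which term of the minimum in~\eqref{L3.2.1} is attained, and sum, converting the increments of $M$ into integrals of $(g_\eta(t)t)^{-1/p}$ or of $g_\eta^{-1/(p-1)}(t)$ by means of the controlled oscillation of $M$ on each piece.

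The one step that would fail as written is your closing dichotomy: ``if the $q^{1/p}$ regime carries at least half the total right-hand mass, we get~\eqref{L3.3.1}; otherwise we get~\eqref{L3.3.2}.'' The two regimes feed two \emph{different} target integrals, $\int_{r_1}^{r_2}q_\tau^{1/p}(\xi)\,d\xi$ and $\int_{r_1}^{r_2}(\xi f_\tau(\xi))^{1/(p-1)}\,d\xi$, so the failure of the first regime to capture half of the first integral does not imply that the second regime captures half of the second one --- both densities could be concentrated on the same subfamily of subintervals. The correct currency for the dichotomy is length. Since $r_2\le\tau r_1$ gives $\Omega_{r_1,r_2}\subset\Omega_{\xi/\tau,\tau\xi}$ for every $\xi\in(r_1,r_2)$, one has $q_\tau(\xi)\le\essinf_{\Omega_{r_1,r_2}}q$ and an analogous upper bound for $\xi f_\tau(\xi)$ on all of $(r_1,r_2)$; hence each target integral is at most $(r_2-r_1)$ times the very quantity that bounds from below the per-interval gain delivered by Lemma~\ref{L3.2}. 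One therefore splits according to whether the subintervals of the first regime have total length at least $(r_2-r_1)/2$ or the complementary family does, and this is exactly what the paper does. With that repair --- plus the bookkeeping you already flagged for the extremal subinterval, where the value ranges $[\eta^{-1/8}M(\rho_i;u),M(\rho_i;u)]$ may overlap and the $t$-integrals must be summed with a little extra care --- your argument is the paper's proof.
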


\begin{proof}
The function ${M (\cdot; u)}$ does not decrease on the interval ${[r_1, r_2]}$ and, moreover,
${M (r; u)} = {M (r - 0; u)}$
for all 
$r \in {(r_1, r_2]}$.
By induction, we construct the finite sequence
of real numbers $\rho_0 < \rho_1 < \ldots < \rho_k$.
Let us take $\rho_0 = r_2$. 
Now, assume that $\rho_i$ is already known. We put
\begin{equation}
	\rho_{i+1}
	=
	\inf
	\{
		\xi \in (r_1, \rho_i)
		:
		M (\xi; u) > \eta^{- 1 / 8} M (\rho_i; u)
	\}.
	\label{PL3.3.1}
\end{equation}
If $\rho_{i+1} = r_1$, then we take $k = i + 1$ and stop.
It is obvious that this procedure must terminate at a finite step.
Since ${M (\cdot; u)}$ is a semicontinuous function,
one can claim that
$
	\{
		\xi \in {(r_1, \rho_i)}
		:
		{M (\xi; u)} > {\eta^{- 1 / 8} M (\rho_i; u)}
	\}
	\ne
	\emptyset
$
for any $i \in \{ 0, \ldots, k - 1 \}$.
Thus, the right-hand side of~\eqref{PL3.3.1} is well defined.
In so doing, we have
\begin{equation}
	 \eta^{- 1 / 8} M (\rho_i; u) \le M (\rho_{i + 1} + 0; u),
	 \quad
	 i = 0, \dots, k - 1
	\label{PL3.3.2}
\end{equation}
and
\begin{equation}
	 M (\rho_{i + 1}; u) \le \eta^{- 1 / 8} M (\rho_i; u),
	 \quad
	 i = 0, \dots, k - 2.
	\label{PL3.3.3}
\end{equation}

For any $i \in \{ 0, \ldots, k - 1 \}$, according to Lemma~\ref{L3.2}, 
at least one of the following two estimates holds:
\begin{equation}
	M (\rho_i; u) - M (\rho_{i + 1} + 0; u)
	\ge
	\varkappa_1
	(\rho_i - \rho_{i + 1})^{p / (p - 1)}
	Q_i
	G_i,
	\label{PL3.3.4}
\end{equation}
\begin{equation}
	M (\rho_i; u) - M (\rho_{i + 1} + 0; u)
	\ge
	\frac{
		\varkappa_1
		(\rho_i - \rho_{i + 1})
		Q_i
		G_i
	}{
		\mu_i^{1 / (p - 1)}
	},
	\label{PL3.3.5}
\end{equation}
where
$$
	Q_i
	=
	\essinf_{
		\Omega_{\rho_{i + 1}, \rho_i}
	}
	q^{1 / (p-1)},
$$
$$
	G_i
	=
	\min_{
		[\eta^{- 1 / 2} M (\rho_i; u), M (\rho_i; u)]
	}
	g^{1 / (p-1)},
$$
$$
	\mu_i
	=
	\esssup_{
		\Omega_{\rho_{i + 1}, \rho_i}
	}
	|b|,
$$
and $\varkappa_1 > 0$ is some constant depending only on
$n$, $p$, $C_1$, $C_2$, and $\eta$.

By $\Xi_1$ we denote the set of integers $i \in \{ 0, \ldots, k - 1 \}$ 
for which~\eqref{PL3.3.4} is fulfilled.
Also let
$\Xi_2 = \{ 0, \ldots, k - 1 \} \setminus \Xi_1$.

At first, we assume that
\begin{equation}
	\sum_{i \in \Xi_1}
	(\rho_i - \rho_{i + 1})
	\ge
	\frac{
		r_2 - r_1
	}{
		2
	}.
	\label{PL3.3.6}
\end{equation}
Formula~\eqref{PL3.3.4} implies the relation
$$
	\left(
		\frac{
			M (\rho_i; u) - M (\rho_{i + 1} + 0; u)
		}{
			G_i
		}
	\right)^{(p - 1) / p}
	\ge
	\varkappa_1^{(p - 1) / p}
	(\rho_i - \rho_{i + 1})
	Q_i^{(p - 1) / p},
$$
whence, taking into account~\eqref{PL3.3.2}, we obtain
$$
	\frac{
		M (\rho_i; u) - \eta^{-1 / 8} M (\rho_i; u)
	}{
		G_i^{(p - 1) / p}
		M^{1 / p} (\rho_i; u)
	}
	\ge
	\varkappa_2
	(\rho_i - \rho_{i + 1})
	Q_i^{(p - 1) / p}
$$
for any $i \in \Xi_1$, where 
$\varkappa_2 = (1 - \eta^{-1 / 8})^{1 / p} \varkappa_1^{(p - 1) / p}$.
Combining this with the inequality
$$
	\int_{
		\eta^{-1 / 8} M (\rho_i; u)
	}^{
		M (\rho_i; u)
	}
	(g_\eta (t) t)^{-1 / p}
	\,
	dt
	\ge
	\frac{
		M (\rho_i; u) - \eta^{-1 / 8} M (\rho_i; u)
	}{
		G_i^{(p - 1) / p}
		M^{1 / p} (\rho_i; u)
	},
$$
one can conclude that
$$
	\int_{
		\eta^{-1 / 8} M (\rho_i; u)
	}^{
		M (\rho_i; u)
	}
	(g_\eta (t) t)^{-1 / p}
	\,
	dt
	\ge
	\varkappa_2
	(\rho_i - \rho_{i + 1})
	Q_i^{(p - 1) / p}
$$
for any $i \in \Xi_1$.
Summing the last estimate over all $i \in \Xi_1 \cap \{0, \ldots, k - 2 \}$, we have
$$
	\sum_{
		i \in \Xi_1 \cap \{0, \ldots, k - 2 \}
	}
	\int_{
		\eta^{-1 / 8} M (\rho_i; u)
	}^{
		M (\rho_i; u)
	}
	(g_\eta (t) t)^{-1 / p}
	\,
	dt
	\ge
	\varkappa_2
	\essinf_{
		\Omega_{r_1, r_2}
	}
	q^{1 / p}
	\sum_{
		i \in \Xi_1 \cap \{0, \ldots, k - 2 \}
	}
	(\rho_i - \rho_{i + 1}).
$$
By~\eqref{PL3.3.3}, this implies that
\begin{equation}
	\int_{
		M (\rho_{k - 1}; u)
	}^{
		M (\rho_0; u)
	}
	(g_\eta (t) t)^{-1 / p}
	\,
	dt
	\ge
	\varkappa_2
	\essinf_{
		\Omega_{r_1, r_2}
	}
	q^{1 / p}
	\sum_{
		i \in \Xi_1 \cap \{0, \ldots, k - 2 \}
	}
	(\rho_i - \rho_{i + 1}).
	\label{PL3.3.7}
\end{equation}
If $k - 1 \not\in \Xi_1$, then combining~\eqref{PL3.3.6} and~\eqref{PL3.3.7}, 
we immediately obtain~\eqref{L3.3.1}.
Let ${k - 1} \in \Xi_1$. Putting $i = k - 1$ in formula~\eqref{PL3.3.4}, 
we obviously have
$$
	\left(
		\frac{
			M (\rho_{k - 1}; u) - M (\rho_k + 0; u)
		}{
			G_{k - 1}
		}
	\right)^{(p - 1) / p}
	\ge
	\varkappa_1^{(p - 1) / p}
	(\rho_{k - 1} - \rho_k)
	Q_{k - 1}^{(p - 1) / p}.
$$
At the same time,
${M (\rho_{k - 1}; u)} \le {\eta^{1 / 8} M (\rho_k + 0; u)}$
according to~\eqref{PL3.3.2}; therefore, one can assert that
\begin{equation}
	\frac{
		\eta^{1 / 8} M (\rho_k + 0; u) - M (\rho_k + 0; u)
	}{
		G_{k - 1}^{(p - 1) / p}
		M^{1 / p} (\rho_k + 0; u)
	}
	\ge
	\varkappa_3
	(\rho_{k - 1} - \rho_k)
	Q_{k - 1}^{(p - 1) / p},
	\label{PL3.3.8}
\end{equation}
where
$\varkappa_3 = (\eta^{1 / 8} - 1)^{1 / p} \varkappa_1^{(p - 1) / p}$.
Since
$$
	\int_{
		M (\rho_k + 0; u)
	}^{
		\eta^{1 / 8} M (\rho_k + 0; u)
	}
	(g_\eta (t) t)^{-1 / p}
	\,
	dt
	\ge
	\frac{
		\eta^{1 / 8} M (\rho_k + 0; u) - M (\rho_k + 0; u)
	}{
		\eta^{p / 8}
		G_{k - 1}^{(p - 1) / p}
		M^{1 / p} (\rho_k + 0; u)
	},
$$
formula~\eqref{PL3.3.8} implies the estimate
$$
	\int_{
		M (\rho_k + 0; u)
	}^{
		\eta^{1 / 8} M (\rho_k + 0; u)
	}
	(g_\eta (t) t)^{-1 / p}
	\,
	dt
	\ge
	\varkappa_4
	(\rho_{k - 1} - \rho_k)
	Q_{k - 1}^{(p - 1) / p},
$$
where
$\varkappa_4 = \eta^{- p / 8} \varkappa_3$.
Summing this with~\eqref{PL3.3.7}, we obtain
$$
	\int_{
		M (r_1 + 0; u)
	}^{
		M (r_2; u)
	}
	(g_\eta (t) t)^{-1 / p}
	\,
	dt
	\ge
	\varkappa_5
	\essinf_{
		\Omega_{r_1, r_2}
	}
	q^{1 / p}
	\sum_{
		i \in \Xi_1
	}
	(\rho_i - \rho_{i + 1}),
$$
where $\varkappa_5 = \min \{ \varkappa_2, \varkappa_4 \} / 2$.
Finally, combining the last inequality and~\eqref{PL3.3.6}, we again derive~\eqref{L3.3.1}.

Now, assume that~\eqref{PL3.3.6} is not valid. Thus, we have
\begin{equation}
	\sum_{i \in \Xi_2}
	(\rho_i - \rho_{i + 1})
	\ge
	\frac{
		r_2 - r_1
	}{
		2
	}.
	\label{PL3.3.9}
\end{equation}
Let $i \in \Xi_2$.
From~\eqref{PL3.3.5}, it follows that
$$
	\frac{
		M (\rho_i; u) - M (\rho_{i + 1} + 0; u)
	}{
		G_i
	}
	\ge
	\frac{
		\varkappa_1
		(\rho_i - \rho_{i + 1})
		Q_i
	}{
		\mu_i^{1 / (p - 1)}
	},
$$
whence, taking into account the relation
\begin{equation}
	\int_{
		M (\rho_{i + 1} + 0; u)
	}^{
		M (\rho_i; u)
	}
	g_\eta^{-1 / (p - 1)}
	(t)
	\,
	dt
	\ge
	\frac{
		M (\rho_i; u) - M (\rho_{i + 1} + 0; u)
	}{
		G_i
	},
	\label{PL3.3.10}
\end{equation}
we obtain
\begin{equation}
	\int_{
		M (\rho_{i + 1} + 0; u)
	}^{
		M (\rho_i; u)
	}
	g_\eta^{-1 / (p - 1)}
	(t)
	\,
	dt
	\ge
	\frac{
		\varkappa_1
		(\rho_i - \rho_{i + 1})
		Q_i
	}{
		\mu_i^{1 / (p - 1)}
	}.
	\label{PL3.3.11}
\end{equation}

Summing~\eqref{PL3.3.11} over all $i \in \Xi_2$, one can conclude that
$$
	\int_{
		M (r_1 + 0; u)
	}^{
		M (r_2; u)
	}
	g_\eta^{-1 / (p - 1)}
	(t)
	\,
	dt
	\ge
	\frac{
		\varkappa_1
		\essinf_{
			\Omega_{r_1, r_2}
		}
		q^{1 / (p - 1)}
	}{
		\esssup_{
			\Omega_{r_1, r_2}
		}
		|b|^{1 / (p - 1)}
	}
	\sum_{i \in \Xi_2}
	(\rho_i - \rho_{i + 1}).
$$
By~\eqref{PL3.3.9}, this immediately implies~\eqref{L3.3.2}.

The proof is completed.
\end{proof}

\begin{Lemma}\label{L3.4}
Let $u$ be a solution of~\eqref{1.1}, \eqref{1.2}.
If $r_0 \le r_1 < r_2$, $\eta > 1$, and $\tau > 1$ are real numbers 
such that $M (r_2; u) \le \eta M (r_1 + 0; u)$, $\tau^{1 / 2} r_1 \le r_2$,
and ${M (r_1 + 0; u)} > 0$,
then estimate~\eqref{L3.3.2} holds,
where the constant $C > 0$ depends only on $n$, $p$, $\eta$, $\tau$, $C_1$, and $C_2$.
\end{Lemma}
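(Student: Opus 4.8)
The plan is to cover the annulus $\Omega_{r_1,r_2}$ by finitely many sub-annuli on each of which one may invoke Lemma~\ref{L3.2}, and then to add up the resulting inequalities. Since $M(\,\cdot\,;u)$ is non-decreasing on $(r_1,r_2)$ by Remark~\ref{R3.1}, this is, in spirit, the argument used for Lemma~\ref{L3.3}, with the roles of the radius and of $M$ interchanged: there $M$ grows by a large factor and the annulus is radially thin, and one chops along $M$; here $M$ grows by at most the factor $\eta$ while the annulus is radially fat, and one chops mainly along the radius.

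First I would build a finite partition $r_1=\rho_0<\rho_1<\dots<\rho_N=r_2$ with the two properties \textbf{(a)} $\rho_{j+1}\le\tau\rho_j$, and \textbf{(b)} $M(\rho_{j+1}-0;u)\le\theta\,M(\rho_j+0;u)$ for each $j$, where $\theta$ is a fixed constant with $1<\theta<\eta^{1/4}$. Property (a) comes from a radial step of ratio $\tau$; whenever such a step would violate (b), one inserts an extra division point at the radius where $M$ first reaches the level $\theta M(\rho_j+0;u)$. Because $M(\,\cdot\,;u)$ increases over $(r_1,r_2)$ by at most the factor $\eta$, property (b) forces only boundedly many (depending on $\eta$ alone) such extra points beyond the $\lceil\log_\tau(r_2/r_1)\rceil$ purely radial ones, so the process stops and $N<\infty$. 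On each $[\rho_j,\rho_{j+1}]$ I would apply Lemma~\ref{L3.2} with $R_0=\rho_j$, $R_1=\rho_{j+1}$ and $\beta=\theta^{-2}$ (admissible by (b)); writing $Q_j=\essinf_{\Omega_{\rho_j,\rho_{j+1}}}q^{1/(p-1)}$, $\mu_j=\esssup_{\Omega_{\rho_j,\rho_{j+1}}}|b|$ and $G_j=\min_{[\theta^{-4}M(\rho_{j+1}-0;u),\,M(\rho_{j+1}-0;u)]}g^{1/(p-1)}$, this gives, with $\gamma$ the constant of Lemma~\ref{L3.2},
$$
	M(\rho_{j+1}-0;u)-M(\rho_j+0;u)
	\ge
	\gamma\,\min\Bigl\{(\rho_{j+1}-\rho_j)^{p/(p-1)},\;\frac{\rho_{j+1}-\rho_j}{\mu_j^{1/(p-1)}}\Bigr\}\,Q_j\,G_j .
$$

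Two elementary comparisons then close the loop. By (b) the partition is fine enough in $M$ that for every $t\in[M(\rho_j+0;u),M(\rho_{j+1}-0;u)]$ the interval $(t/\eta,\eta t)$ contains $[\theta^{-4}M(\rho_{j+1}-0;u),M(\rho_{j+1}-0;u)]$ (here $\theta^4<\eta$ and $\theta<\eta$ are used), hence $g_\eta(t)\le G_j^{p-1}$ and
$$
	\int_{M(\rho_j+0;u)}^{M(\rho_{j+1}-0;u)}g_\eta^{-1/(p-1)}(t)\,dt
	\ge
	\frac{M(\rho_{j+1}-0;u)-M(\rho_j+0;u)}{G_j}.
$$
By (a) the partition is fine enough in the radius that $\Omega_{\rho_j,\rho_{j+1}}\subseteq\Omega_{\xi/\tau,\tau\xi}$ for every $\xi\in[\rho_j,\rho_{j+1}]$, so $f_\tau(\xi)\le Q_j^{p-1}/(1+\xi\mu_j)$; since on the purely radial steps $\rho_{j+1}-\rho_j=(\tau-1)\rho_j$ is comparable to $\rho_j$, this yields
$$
	\int_{\rho_j}^{\rho_{j+1}}(\xi f_\tau(\xi))^{1/(p-1)}\,d\xi
	\le
	C(\tau,p)\,\min\Bigl\{(\rho_{j+1}-\rho_j)^{p/(p-1)},\;\frac{\rho_{j+1}-\rho_j}{\mu_j^{1/(p-1)}}\Bigr\}\,Q_j .
$$
Combining the three displays and dividing by $G_j$ gives $\int_{M(\rho_j+0;u)}^{M(\rho_{j+1}-0;u)}g_\eta^{-1/(p-1)}(t)\,dt\ge(\gamma/C(\tau,p))\int_{\rho_j}^{\rho_{j+1}}(\xi f_\tau(\xi))^{1/(p-1)}\,d\xi$; summing over $j$ and noting that the intervals $[M(\rho_j+0;u),M(\rho_{j+1}-0;u)]$ are non-overlapping and contained in $[M(r_1+0;u),M(r_2;u)]$ produces exactly~\eqref{L3.3.2} with $C=\gamma/C(\tau,p)$, which depends only on $n$, $p$, $\eta$, $\tau$, $C_1$, $C_2$.

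The step I expect to be the real obstacle is the second comparison on the boundedly many \emph{exceptional} sub-annuli — those created by inserting a point to preserve (b). On such a piece $\rho_{j+1}-\rho_j$ need not be comparable to $\rho_j$ (the cut can fall just after $\rho_j$), so the estimate $\int_{\rho_j}^{\rho_{j+1}}(\xi f_\tau)^{1/(p-1)}\le C(\tau,p)\min\{\dots\}Q_j$ is no longer immediate, and one cannot simply discard these pieces because $f_\tau$ might be concentrated on them. These finitely many sub-annuli have to be treated by a separate, more careful argument — for instance by slightly enlarging them so as to swallow the increase of $M$ that forced the cut, or by a direct estimate modelled on the handling of the term ``$k-1\in\Xi_1$'' in the proof of Lemma~\ref{L3.3} — after which the summation above goes through unchanged. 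Minor bookkeeping (jumps of $M$, the values $M(\rho_j\pm0;u)$, the truncated last step $[\rho_{N-1},r_2]$) has to be carried along but presents no real difficulty.
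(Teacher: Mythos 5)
Your overall strategy --- a geometric radial partition, Lemma~\ref{L3.2} on each sub-annulus, conversion of both branches of the $\min$ into $\int(\xi f_\tau(\xi))^{1/(p-1)}\,d\xi$, and summation using the monotonicity of $M(\cdot;u)$ --- is exactly the paper's. The difference is your extra subdivision rule \textbf{(b)}, and it is precisely this rule that produces the obstacle you flag at the end and then leave unresolved. On an ``exceptional'' piece inserted to preserve (b), $\rho_{j+1}-\rho_j$ need not be comparable to $\rho_j$, so when Lemma~\ref{L3.2} returns the branch $(\rho_{j+1}-\rho_j)^{p/(p-1)}$ of the minimum you cannot dominate $\int_{\rho_j}^{\rho_{j+1}}(\xi f_\tau)^{1/(p-1)}\,d\xi$ by it; and, as you correctly observe, such pieces cannot be discarded because $f_\tau$ may be concentrated on them. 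Since you only gesture at possible repairs (``enlarging them'', ``a separate, more careful argument''), the proof as written is incomplete: this is a genuine gap, not bookkeeping.

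The gap is self-inflicted, because condition (b) is unnecessary. The hypothesis $M(r_2;u)\le\eta M(r_1+0;u)$ already controls the oscillation of $M$ on \emph{every} sub-annulus of a purely radial partition: by monotonicity, $M(\rho_{j+1}-0;u)\le M(r_2;u)\le\eta M(r_1+0;u)\le\eta M(\rho_j+0;u)$. Hence Lemma~\ref{L3.2} applies on each piece with a fixed $\beta$ depending only on $\eta$, no extra cut points are ever needed, and every piece satisfies the comparability $\rho_{j+1}-\rho_j\ge(1-\tau^{-1/2})\rho_{j+1}$, including the one truncated piece adjacent to $r_1$. This is exactly how the paper proceeds: it takes $\rho_i=\tau^{-i/2}r_2$, $i=0,\dots,k-1$, and $\rho_k=r_1$ with $k$ maximal such that $\tau^{k/2}r_1\le r_2$, treats the two branches of the minimum separately --- using $\rho_i-\rho_{i+1}\ge(1-\tau^{-1/2})\rho_i$ together with $(\xi f_\tau(\xi))^{1/(p-1)}\le\rho_i^{1/(p-1)}Q_i$ for the first branch, and $(\xi f_\tau(\xi))^{1/(p-1)}\le Q_i/\mu_i^{1/(p-1)}$ for the second --- and sums. (The one genuinely delicate point that remains, in your write-up and in the paper's alike, is matching the interval $[\beta^2M,M]$ over which $G_j$ is taken against the window $(t/\eta,\eta t)$ defining $g_\eta$; this is a matter of choosing exponents consistently and affects only constants.) If you drop (b) and run your own two comparisons on the purely radial partition, the argument closes.
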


\begin{proof}
Consider the maximal integer $k$ satisfying the condition $\tau^{k / 2} r_1 \le r_2$.
We put $\rho_i = \tau^{-i / 2} r_2$, $i = 0, \ldots, k - 1$, and $\rho_k = r_1$.
From Lemma~\ref{L3.2}, it follows that at least one of 
inequalities~\eqref{PL3.3.4}, \eqref{PL3.3.5} is valid 
for any $i \in \{ 0, \ldots, k - 1 \}$.
If~\eqref{PL3.3.4} holds for some $i \in \{ 0, \ldots, k - 1 \}$, then we have
$$
	\frac{
		M (\rho_i; u) - M (\rho_{i + 1} + 0; u)
	}{
		G_i
	}
	\ge
	\varkappa_1
	(\rho_i - \rho_{i + 1})^{p / (p - 1)}
	Q_i.
$$
Since $\rho_i - \rho_{i + 1} \ge (1 - \tau^{-1 / 2}) \rho_i$, this implies the estimate
$$
	\frac{
		M (\rho_i; u) - M (\rho_{i + 1} + 0; u)
	}{
		G_i
	}
	\ge
	\varkappa_6
	(\rho_i - \rho_{i + 1})
	\rho_i^{1 / (p - 1)}
	Q_i,
$$
where the constant $\varkappa_6 > 0$ depends only on
$n$, $p$, $\eta$, $\tau$, $C_1$, and $C_2$. 
Consequently, taking into account~\eqref{PL3.3.10} and the fact that
$$
	\int_{
		\rho_{i + 1}
	}^{
		\rho_i
	}
	(\xi f_\tau (\xi))^{1 / (p - 1)}
	\,
	d\xi
	\le
	(\rho_i - \rho_{i + 1})
	\rho_i^{1 / (p - 1)}
	Q_i,
$$
we obtain
\begin{equation}
	\int_{
		M (\rho_{i + 1} + 0; u)
	}^{
		M (\rho_i; u)
	}
	g_\eta^{-1 / (p - 1)}
	(t)
	\,
	dt
	\ge
	\varkappa_6
	\int_{
		\rho_{i + 1}
	}^{
		\rho_i
	}
	(\xi f_\tau (\xi))^{1 / (p - 1)}
	\,
	d\xi.
	\label{PL3.4.1}
\end{equation}
In turn, if~\eqref{PL3.3.5} is fulfilled for some $i \in \{ 0, \ldots, k - 1 \}$, then
$$
	\frac{
		M (\rho_i; u) - M (\rho_{i + 1} + 0; u)
	}{
		G_i
	}
	\ge
	\frac{
		\varkappa_1
		(\rho_i - \rho_{i + 1})
		Q_i
	}{
		\mu_i^{1 / (p - 1)}
	},
$$
whence in accodance with~\eqref{PL3.3.10} and the evident inequality
$$
	\int_{
		\rho_{i + 1}
	}^{
		\rho_i
	}
	(\xi f_\tau (\xi))^{1 / (p - 1)}
	\,
	d\xi
	\le
	\frac{
		(\rho_i - \rho_{i + 1})
		Q_i
	}{
		\mu_i^{1 / (p - 1)}
	}
$$
formula~\eqref{PL3.4.1} follows again.

To complete the proof, it remains to sum~\eqref{PL3.4.1} over all $i \in \{ 0, \ldots, k - 1 \}$.
\end{proof}

\begin{Lemma}\label{L3.5}
Let $u$ be a solution of~\eqref{1.1}, \eqref{1.2}.
If $r_0 \le r_1 < r_2$, $\theta > 1$, and $\sigma > 1$ are real numbers 
such that $\theta M (r_1 + 0; u) \le M (r_2; u)$, $r_2 \ge \sigma r_1$,
and ${M (r_1 + 0; u)} > 0$, then
\begin{equation}
	\int_{
		M (r_1 + 0; u)
	}^{
		M (r_2; u)
	}
	(g_\theta (t) t)^{-1 / p}
	\,
	dt
	\ge
	C
	\left(
		\int_{r_1}^{r_2}
		(\xi f_\sigma (\xi))^{1 / (p - 1)}
		\,
		d\xi
	\right)^{(p - 1) / p},
	\label{L3.5.1}
\end{equation}
where the constant $C > 0$ depends only on $n$, $p$, $\theta$, $\sigma$, $C_1$, and $C_2$.
\end{Lemma}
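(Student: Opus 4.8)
The plan is to cut $[r_1,r_2]$ into finitely many links of bounded radial ratio, to derive on each link — by Lemma~\ref{L3.3} or Lemma~\ref{L3.4} — a local counterpart of~\eqref{L3.5.1}, and to add these up using the subadditivity of $s\mapsto s^{(p-1)/p}$. First I build, going inward from $r_2$, a chain $r_1=\rho_k<\rho_{k-1}<\dots<\rho_1<\rho_0=r_2$: given $\rho_i$, I take $\rho_{i+1}$ to be the largest of $\sigma^{-1/2}\rho_i$, of $\inf\{\xi\in(r_1,\rho_i):M(\xi;u)>\theta^{-1/2}M(\rho_i;u)\}$, and of $r_1$, stopping when $\rho_{i+1}=r_1$; since $\rho_i/\rho_{i+1}\le\sigma^{1/2}$ and $r_2\ge\sigma r_1$, this terminates after finitely many steps and produces at least two links, each of radial ratio $\le\sigma$. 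Using the semicontinuity of $M(\cdot;u)$ from Remark~\ref{R3.1} one sees that on each link either $M(\rho_i;u)\le\theta M(\rho_{i+1}+0;u)$ (a ``slow'' link) or $M(\rho_i;u)>\theta M(\rho_{i+1}+0;u)$ (a ``fast'' link), and — since $M(\rho_j;u)=M(\rho_j-0;u)\le M(\rho_j+0;u)$ at each junction — the intervals $[M(\rho_{i+1}+0;u),M(\rho_i;u)]$, $i=0,\dots,k-1$, are pairwise non-overlapping subintervals of $[M(r_1+0;u),M(r_2;u)]$ (the jumps of $M(\cdot;u)$ being simply discarded). Hence
\[
	\int_{M(r_1+0;u)}^{M(r_2;u)}(g_\theta(t)t)^{-1/p}\,dt
	\ge
	\sum_{i=0}^{k-1}
	\int_{M(\rho_{i+1}+0;u)}^{M(\rho_i;u)}(g_\theta(t)t)^{-1/p}\,dt.
\]

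On a slow link I invoke Lemma~\ref{L3.4} and on a fast link Lemma~\ref{L3.3}, in both cases with $\eta$ a fixed power of $\theta$ and $\tau=\sigma$; this is admissible because the links requiring Lemma~\ref{L3.4} have radial ratio $\ge\sigma^{1/2}$, while for Lemma~\ref{L3.3} the radial ratio is $\le\sigma$ and the constant there does not depend on $\tau$. Each link then carries either the ``$f$-estimate'' $\int_{M(\rho_{i+1}+0;u)}^{M(\rho_i;u)}g_\eta^{-1/(p-1)}(t)\,dt\ge C\int_{\rho_{i+1}}^{\rho_i}(\xi f_\sigma(\xi))^{1/(p-1)}\,d\xi$ (which always holds on slow links), or, on some fast links, only the ``$q$-estimate'' $\int_{M(\rho_{i+1}+0;u)}^{M(\rho_i;u)}(g_\eta(t)t)^{-1/p}\,dt\ge C\int_{\rho_{i+1}}^{\rho_i}q_\sigma^{1/p}(\xi)\,d\xi$. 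From either one I aim to extract the link inequality
\[
	\int_{M(\rho_{i+1}+0;u)}^{M(\rho_i;u)}(g_\theta(t)t)^{-1/p}\,dt
	\ge
	C\Big(\int_{\rho_{i+1}}^{\rho_i}(\xi f_\sigma(\xi))^{1/(p-1)}\,d\xi\Big)^{(p-1)/p}
	\qquad(\ast)
\]
by the same device that powers the proofs of Lemmas~\ref{L3.2}--\ref{L3.3}: if $G_i$ denotes the infimum of $g^{1/(p-1)}$ over a fixed-ratio window about $M(\rho_i;u)$ containing the link's $M$-range, then $g_\theta(t)\le G_i^{p-1}$, hence $(g_\theta(t)t)^{-1/p}\ge G_i^{-(p-1)/p}M(\rho_i;u)^{-1/p}$, throughout that range. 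On a slow link the $M$-range has bounded ratio, so integrating this pointwise bound and feeding in the $f$-estimate — which forces $\int_{\rho_{i+1}}^{\rho_i}(\xi f_\sigma(\xi))^{1/(p-1)}\,d\xi\le CG_i^{-1}M(\rho_i;u)$ — gives $(\ast)$; on a fast link one combines the same pointwise bound with the $q$-estimate, using $\xi f_\sigma(\xi)\le\xi q_\sigma(\xi)$ and $\xi\asymp\rho_i$ on the link. Summing $(\ast)$ over $i$ and using that $s\mapsto s^{(p-1)/p}$ is concave with value $0$ at $0$, hence subadditive, one concludes
\begin{align*}
	\int_{M(r_1+0;u)}^{M(r_2;u)}(g_\theta(t)t)^{-1/p}\,dt
	&\ge
	C\sum_{i=0}^{k-1}\Big(\int_{\rho_{i+1}}^{\rho_i}(\xi f_\sigma(\xi))^{1/(p-1)}\,d\xi\Big)^{(p-1)/p}\\
	&\ge
	C\Big(\int_{r_1}^{r_2}(\xi f_\sigma(\xi))^{1/(p-1)}\,d\xi\Big)^{(p-1)/p},
\end{align*}
which is~\eqref{L3.5.1}.

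I expect the hard part to be the passage $(\ast)$ on each link: choosing the infimum-windows that define $G_i$ so that they are simultaneously compatible with the output of Lemmas~\ref{L3.3}--\ref{L3.4} and with the pointwise bound on $(g_\theta(t)t)^{-1/p}$ (the monotonicity of $\theta\mapsto g_\theta$ and the freedom in choosing $\eta$ should supply the necessary room), reconciling the ``$q$-estimate'' alternative of Lemma~\ref{L3.3} with the purely $f_\sigma$-shaped right-hand side, and handling the bookkeeping on links where $M(\cdot;u)$ jumps or where the radial ratio degenerates near $r_1$ (the terminal link may have to be merged with its neighbour, or the annulus width in the auxiliary lemmas enlarged to $\sigma^2$ and the loss absorbed). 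The remaining ingredients — the chain construction, the non-overlap of the $M$-intervals, and the subadditivity step — are routine.
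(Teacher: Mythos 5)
Your overall architecture (a radial chain, a per-link dichotomy between the $q$-alternative and the $f$-alternative supplied by Lemmas~\ref{L3.3}--\ref{L3.4}, and convexity of $s\mapsto s^{p/(p-1)}$ to reach the exponent $(p-1)/p$) matches the paper's, but there is a genuine gap at exactly the step you flag as the hard part: the per-link inequality $(\ast)$ is false on links where only the $f$-estimate is available and the increment of $M$ is small. Writing $\Delta_i=M(\rho_i;u)-M(\rho_{i+1}+0;u)$, the $f$-estimate together with your windows yields only
$$
	\int_{\rho_{i+1}}^{\rho_i}(\xi f_\sigma(\xi))^{1/(p-1)}\,d\xi
	\le
	C\,G_i^{-1}\Delta_i ,
$$
while your pointwise bound gives for the left-hand side of $(\ast)$ only $G_i^{-(p-1)/p}M(\rho_i;u)^{-1/p}\Delta_i$; the ratio of the two sides of $(\ast)$ is therefore of order $\bigl(\Delta_i/M(\rho_i;u)\bigr)^{1/p}$, which tends to $0$ as $\Delta_i/M(\rho_i;u)\to 0$. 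On a ``slow'' link nothing prevents $\Delta_i$ from being an arbitrarily small (yet positive) fraction of $M(\rho_i;u)$ while $\int_{\rho_{i+1}}^{\rho_i}(\xi f_\sigma)^{1/(p-1)}d\xi$ saturates its upper bound, so no choice of the windows defining $G_i$ can rescue $(\ast)$; the defect is not confined to the terminal link and is not a bookkeeping issue.

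The paper avoids this by never performing the conversion link by link. It fixes a purely geometric radial partition of ratio $\sigma^{1/4}$, lets $\Xi_1$ (resp.\ $\Xi_2$) be the links carrying the $q$-estimate (resp.\ the $f$-estimate), and distinguishes which class carries at least half of $\int_{r_1}^{r_2}(\xi f_\sigma)^{1/(p-1)}d\xi$. In the $\Xi_1$ case your per-link computation is essentially the paper's. In the $\Xi_2$ case the $f$-estimates are first summed over all of $\Xi_2$, producing $\int_{M(r_1+0;u)}^{M(r_2;u)}g_\eta^{-1/(p-1)}(t)\,dt$ on the left, and only then is the conversion
$$
	\left(
	\int_{M(r_1+0;u)}^{M(r_2;u)}(g_\theta(t)t)^{-1/p}\,dt
	\right)^{p/(p-1)}
	\ge
	\varkappa
	\int_{M(r_1+0;u)}^{M(r_2;u)}g_\eta^{-1/(p-1)}(t)\,dt
$$
carried out, globally over the whole $M$-range. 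This is legitimate precisely because the hypothesis $\theta M(r_1+0;u)\le M(r_2;u)$ guarantees that the full range spans a fixed ratio, so it can be cut into pieces of ratio between $\eta^{1/2}$ and $\eta$ and the superadditivity of $s\mapsto s^{p/(p-1)}$ applied there. To repair your argument you must restructure it along these lines --- aggregate the $f$-links before converting --- rather than adjust the windows or merge the last link.
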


\begin{proof}
We denote $\tau = \sigma^{1/2}$ and $\eta = \theta^{1/2}$. 
Take the maximal integer $k$ satisfying the condition $\tau^{k / 2} r_1 \le r_2$.
As in the proof of Lemma~\ref{L3.4}, we put 
$\rho_i = \tau^{- i / 2} r_2$, $i = 0, \ldots, k - 1$, and $\rho_k = r_1$.

According to Lemmas~\ref{L3.3} and~\ref{L3.4}, for any $i \in \{ 0, \ldots, k - 1 \}$ 
at least one of the following two inequalities holds:
\begin{equation}
	\int_{
		M (\rho_{i + 1} + 0; u)
	}^{
		M (\rho_i; u)
	}
	(g_\eta (t) t)^{-1 / p}
	\,
	dt
	\ge
	\varkappa_7
	\int_{
		\rho_{i + 1}
	}^{
		\rho_i
	}
	q_\tau^{1 / p}
	(\xi)
	\,
	d\xi,
	\label{PL3.5.1}
\end{equation}
\begin{equation}
	\int_{
		M (\rho_{i + 1} + 0; u)
	}^{
		M (\rho_i; u)
	}
	g_\eta^{-1 / (p - 1)}
	(t)
	\,
	dt
	\ge
	\varkappa_7
	\int_{
		\rho_{i + 1}
	}^{
		\rho_i
	}
	(\xi f_\tau (\xi))^{1 / (p - 1)}
	\,
	d\xi,
	\label{PL3.5.2}
\end{equation}
where the constant $\varkappa_7 > 0$ depends only on $n$, $p$, $\eta$, $\tau$, $C_1$, and $C_2$.

By $\Xi_1$ we mean the set of integers $i \in \{ 0, \ldots, k - 1 \}$ 
for which~\eqref{PL3.5.1} is fulfilled.
Also put $\Xi_2 = \{ 0, \ldots, k - 1 \} \setminus \Xi_1$ .

At first, let
\begin{equation}
	\sum_{i \in \Xi_1}
	\int_{
		\rho_{i + 1}
	}^{
		\rho_i
	}
	(\xi f_\sigma (\xi))^{1 / (p - 1)}
	\,
	d\xi
	\ge
	\frac{1}{2}
	\int_{
		r_1
	}^{
		r_2
	}
	(\xi f_\sigma (\xi))^{1 / (p - 1)}
	\,
	d\xi.
	\label{PL3.5.3}
\end{equation}
Summing~\eqref{PL3.5.1} over all $i \in \Xi_1$, we have
\begin{equation}
	\int_{
		M (r_1 + 0; u)
	}^{
		M (r_2; u)
	}
	(g_\eta (t) t)^{-1 / p}
	\,
	dt
	\ge
	\varkappa_7
	\sum_{i \in \Xi_1}
	\int_{
		\rho_{i + 1}
	}^{
		\rho_i
	}
	q_\tau^{1 / p}
	(\xi)
	\,
	d\xi.
	\label{PL3.5.4}
\end{equation}
For any $i \in \{ 0, \ldots, k - 1 \}$ there exists $\xi_i \in (\rho_{i+1}, \rho_i)$ such that
$$
	\int_{
		\rho_{i + 1}
	}^{
		\rho_i
	}
	q_\tau^{1 / p}
	(\xi)
	\,
	d\xi
	\ge
	(\rho_i - \rho_{i + 1})
	q_\tau^{1 / p}
	(\xi_i).
$$
Thus,
\begin{align*}
	\left(
		\sum_{i \in \Xi_1}
		\int_{
			\rho_{i + 1}
		}^{
			\rho_i
		}
		q_\tau^{1 / p}
		(\xi)
		\,
		d\xi
	\right)^{p / (p - 1)}
	&
	\ge
	\sum_{i \in \Xi_1}
	\left(
		\int_{
			\rho_{i + 1}
		}^{
			\rho_i
		}
		q_\tau^{1 / p}
		(\xi)
		\,
		d\xi
	\right)^{p / (p - 1)}
	\\
	&
	\ge
	\sum_{i \in \Xi_1}
	(\rho_i - \rho_{i + 1})^{p / (p - 1)}
	q_\tau^{1 / (p - 1)}
	(\xi_i),
\end{align*}
whence, taking into account the fact that
\begin{align*}
	(\rho_i - \rho_{i + 1})^{p / (p - 1)}
	q_\tau^{1 / (p - 1)}
	(\xi_i)
	&
	\ge
	(1 - \tau^{-1 / 2})^{1 / (p - 1)}
	(\rho_i - \rho_{i + 1})
	(
		\rho_i
		q_\tau
		(\xi_i)
	)^{1 / (p - 1)}
	\\
	&
	\ge
	(1 - \tau^{-1 / 2})^{1 / (p - 1)}
	\int_{
		\rho_{i + 1}
	}^{
		\rho_i
	}
	(\xi f_\sigma (\xi))^{1 / (p - 1)}
	\,
	d\xi
\end{align*}
for any $i \in \Xi_1$, we obtain
$$
	\left(
		\sum_{i \in \Xi_1}
		\int_{
			\rho_{i + 1}
		}^{
			\rho_i
		}
		q_\tau^{1 / p}
		(\xi)
		\,
		d\xi
	\right)^{p / (p - 1)}
	\ge
	(1 - \tau^{-1 / 2})^{1 / (p - 1)}
	\sum_{i \in \Xi_1}
	\int_{
		\rho_{i + 1}
	}^{
		\rho_i
	}
	(\xi f_\sigma (\xi))^{1 / (p - 1)}
	\,
	d\xi.
$$
The last estimate, \eqref{PL3.5.3}, and~\eqref{PL3.5.4} immediately imply~\eqref{L3.5.1}.

Now, assume that~\eqref{PL3.5.3} is not valid.
In this case, we obviously have
\begin{equation}
	\sum_{i \in \Xi_2}
	\int_{
		\rho_{i + 1}
	}^{
		\rho_i
	}
	(\xi f_\sigma (\xi))^{1 / (p - 1)}
	\,
	d\xi
	\ge
	\frac{1}{2}
	\int_{
		r_1
	}^{
		r_2
	}
	(\xi f_\sigma (\xi))^{1 / (p - 1)}
	\,
	d\xi.
	\label{PL3.5.5}
\end{equation}
For every $i \in \Xi_2$ inequality~\eqref{PL3.5.2} holds. Consequently, 
\begin{equation}
	\int_{
		M (r_1 + 0; u)
	}^{
		M (r_2; u)
	}
	g_\eta^{-1 / (p - 1)}
	(t)
	\,
	dt
	\ge
	\varkappa_7
	\sum_{i \in \Xi_2}
	\int_{
		\rho_{i + 1}
	}^{
		\rho_i
	}
	(\xi f_\tau (\xi))^{1 / (p - 1)}
	\,
	d\xi.
	\label{PL3.5.6}
\end{equation}
Let us show that
\begin{equation}
	\left(
		\int_{
			M (r_1 + 0; u)
		}^{
			M (r_2; u)
		}
		(g_\theta (t) t)^{-1 / p}
		\,
		dt
	\right)^{p / (p - 1)}
	\ge
	\varkappa_8
	\int_{
		M (r_1 + 0; u)
	}^{
		M (r_2; u)
	}
	g_\eta^{-1 / (p - 1)}
	(t)
	\,
	dt,
	\label{PL3.5.7}
\end{equation}
where the constant $\varkappa_8 > 0$ depends only on $p$ and $\theta$.

Really, take the maximal integer $l$ satisfying the condition 
$\eta^{l / 2} M (r_1 + 0; u) \le M (r_2; u)$.
We put 
$m_i = \eta^{- i / 2} M (r_2; u)$,
$i = 0, \ldots, l - 1$,
and
$m_l = M (r_1 + 0; u)$.

For any $i \in \{ 0, \ldots, l - 1 \}$ there exists $t_i \in (m_{i+1}, m_i)$ such that
$$
	\int_{
		m_{i + 1}
	}^{
		m_i
	}
	(g_\theta (t) t)^{-1 / p}
	\,
	dt
	\ge
	(m_i - m_{i + 1})
	(g_\theta (t_i) t_i)^{-1 / p}.
$$
Hence, we have
\begin{align*}
	\left(
		\int_{
			M (r_1 + 0; u)
		}^{
			M (r_2; u)
		}
		(g_\theta (t) t)^{-1 / p}
		\,
		dt
	\right)^{p / (p - 1)}
	&
	=
	\left(
		\sum_{i = 0}^{l - 1}
		\int_{
			m_{i + 1}
		}^{
			m_i
		}
		(g_\theta (t) t)^{-1 / p}
		\,
		dt
	\right)^{p / (p - 1)}
	\\
	&
	\ge
	\sum_{i = 0}^{l - 1}
	\left(
		\int_{
			m_{i + 1}
		}^{
			m_i
		}
		(g_\theta (t) t)^{-1 / p}
		\,
		dt
	\right)^{p / (p - 1)}
	\\
	&
	\ge
	\sum_{i = 0}^{l - 1}
	(m_i - m_{i + 1})^{p / (p - 1)}
	(g_\theta (t_i) t_i)^{-1 / (p - 1)}.
\end{align*}
Since
\begin{align*}
	(m_i - m_{i + 1})^{p / (p - 1)}
	(g_\theta (t_i) t_i)^{-1 / (p - 1)}
	&
	\ge
	(1 - \eta^{-1 / 2})^{1 / (p - 1)}
	(m_i - m_{i + 1})
	g_\theta^{-1 / (p - 1)} (t_i)
	\\
	&
	\ge
	(1 - \eta^{-1 / 2})^{1 / (p - 1)}
	\int_{
		m_{i + 1}
	}^{
		m_i
	}
	g_\eta^{-1 / (p - 1)} (t)
	\,
	dt
\end{align*}
for all $i \in \{ 0, \ldots, l - 1 \}$,
this implies the estimate
$$
	\left(
		\int_{
			M (r_1 + 0; u)
		}^{
			M (r_2; u)
		}
		(g_\theta (t) t)^{-1 / p}
		\,
		dt
	\right)^{p / (p - 1)}
	\ge
	(1 - \eta^{-1 / 2})^{1 / (p - 1)}
	\sum_{i = 0}^{l - 1}
	\int_{
		m_{i + 1}
	}^{
		m_i
	}
	g_\eta^{-1 / (p - 1)} (t)
	\,
	dt
$$
from which~\eqref{PL3.5.7} follows at once.

To complete the proof, 
it remains to combine~\eqref{PL3.5.5}, \eqref{PL3.5.6}, and~\eqref{PL3.5.7}.
\end{proof}

\begin{proof}[Proof of Theorem~$\ref{T2.1}$]
Let $u$ be a nontrivial solution of~\eqref{1.1}, \eqref{1.2}.
There obviously exists a real number $r_1 > r_0$ such that $M (r_1; u) > 0$. 
By the maximum principle, ${M (\cdot; u)}$ is a non-decreasing 
function on the interval $(r_0, \infty)$; therefore, 
${M (r_1 + 0; u)} \ge {M (r_1; u)} > 0$. 

At first, assume that
\begin{equation}
	\lim_{r \to \infty}
	M (r; u)
	=
	M (\infty; u)
	<
	\infty.
	\label{PT2.1.1}
\end{equation}
Taking the real number $r_1$ large enough, we obtain
${M (r_1 + 0; u)} \le \theta {M (r; u)}$
for all $r > r_1$.
Thus,
\begin{equation}
	\int_{
		M (r_1 + 0; u)
	}^{
		M (\infty; u)
	}
	g_\theta^{-1 / (p - 1)}
	(t)
	\,
	dt
	\ge
	C
	\int_{r_1}^\infty
	(\xi f_\sigma (\xi))^{1 / (p - 1)}
	\,
	d\xi
	\label{PT2.1.2}
\end{equation}
by Lemma~\ref{L3.4}, where the constant $C > 0$ depends 
only on $n$, $p$, $\theta$, $\sigma$, $C_1$, and $C_2$.
This contradicts condition~\eqref{T2.1.2}.

Now, let
\begin{equation}
	\lim_{r \to \infty}
	M (r; u)
	=
	\infty,
	\label{PT2.1.3}
\end{equation}
then Lemma~\ref{L3.5} allows us to assert that
$$
	\int_{
		M (r_1 + 0; u)
	}^{
		\infty
	}
	(g_\theta (t) t)^{-1 / p}
	\,
	dt
	\ge
	C
	\left(
		\int_{r_1}^\infty
		(\xi f_\sigma (\xi))^{1 / (p - 1)}
		\,
		d\xi
	\right)^{(p - 1) / p}
$$
with some constant $C > 0$ dependind only on $n$, $p$, $\theta$, $\sigma$, $C_1$, and $C_2$.
In turn, this contradicts conditions~\eqref{T2.1.1} and~\eqref{T2.1.2}.

The proof is completed.
\end{proof}

\begin{proof}[Proof of Theorem~$\ref{T2.2}$]
Let $u$ be a nontrivial solution of~\eqref{1.1}, \eqref{1.2}.
As in the proof of Theorem~\ref{T2.1}, we take a real number $r_1 > r_0$ 
such that ${M (r_1; u)} > 0$. 
If~\eqref{PT2.1.1} holds, then Lemma~\ref{L3.4} implies inequality~\eqref{PT2.1.2} 
which contradicts~\eqref{T2.1.2}.
Consequently, one can assume that~\eqref{PT2.1.3} is fulfilled.

According to Lemma~\ref{L3.5},
$$
	\int_{
		M (r_1 + 0; u)
	}^{
		M (r; u)
	}
	(g_\theta (t) t)^{-1 / p}
	\,
	dt
	\ge
	C
	\left(
		\int_{r_1}^r
		(\xi f_\sigma (\xi))^{1 / (p - 1)}
		\,
		d\xi
	\right)^{(p - 1) / p}
$$
for all sufficiently large $r$,
where the constant $C > 0$ depends only on $n$, $p$, $\theta$, $\sigma$, $C_1$, and $C_2$.
In so doing, from~\eqref{T2.1.2} and~\eqref{T2.2.1}, it follows that
$$
	\int_{r_1}^r
	(\xi f_\sigma (\xi))^{1 / (p - 1)}
	\,
	d\xi
	\ge
	\frac{1}{2}
	\int_{r_0}^r
	(\xi f_\sigma (\xi))^{1 / (p - 1)}
	\,
	d\xi
$$
and
$$
	\int_1^{
		M (r; u)
	}
	(g_\theta (t) t)^{-1 / p}
	\,
	dt
	\ge
	\frac{1}{2}
	\int_{
		M (r_1 + 0; u)
	}^{
		M (r; u)
	}
	(g_\theta (t) t)^{-1 / p}
	\,
	dt
$$
for all sufficiently large $r$.

Thus, combining the last three estimates, we complete the proof.
\end{proof}

\begin{proof}[Proof of Theorem~$\ref{T2.3}$]
Let $u$ be a nontrivial solution of~\eqref{1.1}, \eqref{1.2}.
We take a real number $r_1 > r_0$ such that ${M (r_1; u)} > 0$.
Also assume that $r > r_1$ is a real number satisfying the condition $\sigma r_1 < r$ 
and, moreover, $k$ is the maximal integer for which $\sigma^{k/2} r_1 \le r$.
We put $\rho_i = \sigma^{-i/2} r$, $i = 0, \ldots, k - 1$, and $\rho_k = r_1$.

By Lemmas~\ref{L3.3} and~\ref{L3.4}, for any $i \in \{ 0, \ldots, k - 1 \}$ 
at least one of the following two inequalities is valid:
$$
	\int_{
		M (\rho_{i + 1} + 0; u)
	}^{
		M (\rho_i; u)
	}
	(g_\theta (t) t)^{-1 / p}
	\,
	dt
	\ge
	C
	\int_{
		\rho_{i + 1}
	}^{
		\rho_i
	}
	q_\sigma^{1 / p}
	(\xi)
	\,
	d\xi,
$$
$$
	\int_{
		M (\rho_{i + 1} + 0; u)
	}^{
		M (\rho_i; u)
	}
	g_\theta^{-1 / (p - 1)}
	(t)
	\,
	dt
	\ge
	C
	\int_{
		\rho_{i + 1}
	}^{
		\rho_i
	}
	(\xi f_\sigma (\xi))^{1 / (p - 1)}
	\,
	d\xi,
$$
where the constant $C > 0$ depends only on $n$, $p$, $\theta$, $\sigma$, $C_1$, and $C_2$.
Hence,
\begin{align*}
	&
	\int_{
		M (\rho_{i + 1} + 0; u)
	}^{
		M (\rho_i; u)
	}
	(g_\theta (t) t)^{-1 / p}
	\,
	dt
	+
	\int_{
		M (\rho_{i + 1} + 0; u)
	}^{
		M (\rho_i; u)
	}
	g_\theta^{-1 / (p - 1)}
	(t)
	\,
	dt
	\\
	&
	\quad
	\ge
	C
	\int_{
		\rho_{i + 1}
	}^{
		\rho_i
	}
	\min
	\{
		(\xi f_\sigma (\xi))^{1 / (p - 1)},
		\,
		q_\sigma^{1 / p}
		(\xi)
	\}
	\,
	d\xi
\end{align*}
for any $i \in \{ 0, \ldots, k - 1 \}$.
Summing the last estimate over all $i \in \{ 0, \ldots, k - 1 \}$ , we obtain
\begin{align}
	&
	\int_{
		M (r_1 + 0; u)
	}^{
		M (r; u)
	}
	(g_\theta (t) t)^{-1 / p}
	\,
	dt
	+
	\int_{
		M (r_1 + 0; u)
	}^{
		M (r; u)
	}
	g_\theta^{-1 / (p - 1)}
	(t)
	\,
	dt
	\nonumber
	\\
	&
	\quad
	\ge
	C
	\int_{r_1}^r
	\min
	\{
		(\xi f_\sigma (\xi))^{1 / (p - 1)},
		\,
		q_\sigma^{1 / p}
		(\xi)
	\}
	\,
	d\xi.
	\label{PT2.3.1}
\end{align}
On the other hand, condition~\eqref{T2.3.1} allows us to assert that
\begin{align*}
	\int_{r_1}^r
	\min
	\{
		(\xi f_\sigma (\xi))^{1 / (p - 1)},
		\,
		q_\sigma^{1 / p}
		(\xi)
	\}
	\,
	d\xi
	\ge
	{}
	&
	\frac{1}{2}
	\int_{r_0}^r
	\min
	\{
		(\xi f_\sigma (\xi))^{1 / (p - 1)},
		\,
		q_\sigma^{1 / p}
		(\xi)
	\}
	\,
	d\xi
	\\
	&
	+
	\frac{1}{C}
	\int_{
		M (r_1 + 0; u)
	}^1
	(g_\theta (t) t)^{-1 / p}
	\,
	dt
	\\
	&
	+
	\frac{1}{C}
	\int_{
		M (r_1 + 0; u)
	}^1
	g_\theta^{-1 / (p - 1)}
	(t)
	\,
	dt
\end{align*}
if the real number $r$ is large enough.
Combining this with~\eqref{PT2.3.1}, we complete the proof.
\end{proof}

\begin{proof}[Proof of Theorem~$\ref{T2.4}$]
Let $u$ be a solution of~\eqref{1.1}, \eqref{1.2}.
If ${M (\cdot; u)}$ is a bounded function on the interval $(r_0, \infty)$,
then inequality~\eqref{T2.4.1} is obviously valid for all sufficiently large $r$
since, in this case, the right-hand side of~\eqref{T2.4.1} tends to zero as $r \to \infty$,
whereas the left-hand side is bounded below by a positive real number 
for all $r \in (r_0, \infty)$.

In turn, if ${M (\cdot; u)}$ is not a bounded function, then~\eqref{PT2.1.3} holds.
We remind that ${M (\cdot; u)}$ is a non-decreasing function on $(r_0, \infty)$ 
according to the maximum principle.
Thus, estimate~\eqref{T2.4.1} follows immediately from Lemma~\ref{L3.5}.

The proof is completed.
\end{proof}

\begin{Remark}\label{R3.2}
From the arguments given above, it can be seen that Theorems~\ref{T2.1}--\ref{T2.4} 
remain hold for solutions of the problem
$$
	\left\{
		\begin{aligned}
		&
		{\rm div} \, A (x, D u)
		+
		b (x) |D u|^{p - 1}
		\ge
		q (x) g (u)
		\quad
		\mbox{in }
		\Omega_{r_0, \infty},
		\\
		&
		\left.
			u
		\right|_{
			\Gamma_{r_0, \infty}
		}
		=
		0,
		\end{aligned}
	\right.
$$
where
$
	\Omega_{r_0, \infty}
	=
	\{
		x \in \Omega : r_0  < |x|
	\}
$
and
$
	\Gamma_{r_0, \infty}
	=
	\{
		x \in \partial \Omega : r_0  < |x|
	\}.
$
In this case, in the hypotheses of Theorems~\ref{T2.1}--\ref{T2.3},
we must additionaly assume that ${M (\cdot; u)}$ 
is a non-decreasing function on the interval $(r_0, \infty)$.
\end{Remark}

\end{document}